\documentclass[12pt]{amsart}
\usepackage{a4wide}
\usepackage[utf8]{inputenc}
\usepackage{amsmath}
\usepackage{amssymb}
\usepackage{amsfonts}
\usepackage{amsopn}
\usepackage{graphicx}
\usepackage{enumerate}
\usepackage{color}
\usepackage{mathtools}
\usepackage{mathrsfs}
\usepackage{bbm}
\usepackage{yhmath}
\usepackage{cite}
\usepackage[colorlinks,linkcolor=blue,anchorcolor=blue,citecolor=blue]{hyperref}

\setlength{\parskip}{0.3\baselineskip}

\newtheorem{theorem}{Theorem}[section]
\newtheorem{proposition}[theorem]{Proposition}
\newtheorem{lemma}[theorem]{Lemma}

\newtheorem*{Theorem}{Theorem}
\theoremstyle{definition}
\newtheorem{example}[theorem]{Example}
\theoremstyle{remark}
\newtheorem{remark}[theorem]{Remark}

\numberwithin{equation}{section}

\newcommand{\R}{\mathbb{R}}

\newcommand{\N}{\mathbb{N}}
\newcommand{\PP}{\mathbb{P}}
\newcommand{\E}{\mathbb{E}}
\newcommand{\f}{\infty}

\newcommand{\D}{\;\mathrm{d}}
\newcommand{\x}{\boldsymbol{x}}

\newcommand{\cale}{\mathcal{E}}
\newcommand{\calh}{\mathcal{H}}

\newcommand{\cals}{\mathcal{S}}

\title[Eigen-Falconer theorem in $\mathbb{R}^d$]{On the Eigen-Falconer theorem in $\mathbb{R}^d$}

\author[W. Li]{Wenxia Li}
\address[Wenxia Li]{School of Mathematical Sciences, Key Laboratory of MEA (Ministry of Education) $\&$ Shanghai Key Laboratory of PMMP, East China Normal University, Shanghai 200241, People's Republic of China}
\email{wxli@math.ecnu.edu.cn}

\author[Z. Wang]{Zhiqiang Wang}
\address[Zhiqiang Wang]{College of Mathematics and Statistics, Center of Mathematics, Chongqing University, Chongqing 401331, People's Republic of China \& Department of Mathematics, University of British Columbia, Vancouver, British Columbia, V6T 1Z2, Canada}
\email{zhiqiangwzy@163.com,~zqwangmath@cqu.edu.cn}

\author[J. Xu]{Jiayi Xu}
\address[Jiayi Xu]{School of Mathematical Sciences, Key Laboratory of MEA (Ministry of Education) $\&$ Shanghai Key Laboratory of PMMP, East China Normal University, Shanghai 200241, People's Republic of China}
\email{dkxujy@163.com}

\subjclass[2020]{28A75}

\begin{document}

\begin{abstract}
In this paper, we study the analogous Erd\H{o}s similarity conjecture in higher dimensions and generalize the Eigen-Falconer theorem.
We show that if $A=\{\x_n\}_{n=1}^\f \subseteq \mathbb{R}^d$ is a sequence of non-zero vectors satisfying
\[ \lim_{n \to \f} \|\x_n\| =0 \quad \text{and} \quad \lim_{n \to \f} \frac{\|\x_{n+1}\|}{\|\x_n\|} = 1, \]
then there exists a measurable set $E \subseteq \mathbb{\R}^d$ with positive Lebesgue measure such that $E$ contains no affine copies of $A$.
\end{abstract}

\keywords{Erd\H{o}s similarity conjecture, probabilistic construction}

\maketitle

\section{Introduction}

For a finite set $A \subseteq \R$, by the Lebesgue density theorem, any measurable subset $E \subset \R$ with positive Lebesgue measure contains a similar copy of $A$ (see \cite[Proposition 2.3]{Jung-Lai-Mooroogen-2025}). Here, a similar copy of $A$ is $\lambda A + t$ where $\lambda \ne 0$ and $t \in \R$.
In 1974, P. Erd\H{o}s suggested the following question \cite{Erdos-1974}, which now is known as the Erd\H{o}s similarity conjecture.
\begin{quote}
  \emph{Let $A \subseteq \R$ be an infinite set. Prove that there is a measurable subset of $\R$ with positive Lebesgue measure which does not contain a similar copy of $A$.}
\end{quote}
Although there is some important progress on the Erd\H{o}s similarity conjecture, it remains open even for any geometric sequence $A=\{ r^n \}_{n=1}^\f$ where $0< r < 1$.
We refer the reader to \cite{Svetic-2000,Jung-Lai-Mooroogen-2025} for an overview and some recent advancements of the Erd\H{o}s similarity conjecture.

Let $\mu$ denote the Lebesgue measure on $\R$. A set $A \subseteq \R$ is called an \emph{Erd\H{o}s set} if there exists a measurable subset $E \subset \R$ with $\mu(E)>0$ such that $\lambda A + t \not\subseteq E$ for any $\lambda \ne 0$ and any $t \in \R$.
In other words, the Erd\H{o}s similarity conjecture states that every infinite set in $\R$ is an Erd\H{o}s set.
Observe that any unbounded set is an Erd\H{o}s set.
Thus, if one could show that all strictly decreasing sequences converging to $0$ are Erd\H{o}s sets, then the Erd\H{o}s similarity conjecture would be fully settled.
The first significant progress was made independently by Eigen \cite{Eigen-1985} and Falconer \cite{Falconer-1984}.

\begin{Theorem}[Eigen-Falconer]
  Let $A=\{a_n\}_{n=1}^\f \subseteq \R$ be a strictly decreasing sequence converging to $0$. If
  \begin{equation}\label{eq:Eigen-Falconer-condition}
    \lim_{n \to \f} \frac{a_{n+1}}{a_n} = 1,
  \end{equation}
  then the set $A$ is an Erd\H{o}s set.
\end{Theorem}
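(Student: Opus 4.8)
The plan is to construct $E$ by a \emph{probabilistic method}: at a sequence of scales I delete random small balls, and I show that with positive probability the surviving set keeps positive measure while meeting every copy of $A$. Since the hypotheses constrain only the norms $r_n := \|\x_n\|$, I read an affine copy as a similar copy $\lambda O A + t$ (with $\lambda \neq 0$, $O \in O(d)$, $t \in \R^d$), which is the natural class here; the point $t + \lambda O \x_n$ then sits at distance $\lambda r_n$ from the center $t$. Using the translation-stationarity and scaling self-similarity of the random model (take deletion scales $s_k = 2^{-k}$, each level a rescaled copy in law of the previous), I first reduce to centers $t$ in a fixed cube $Q$ and scales $\lambda$ in the fixed interval $[1,2]$, carrying the orthogonal factor $O$ along in the discretization.

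The mechanism behind the trapping is cleanest in logarithmic radial coordinates. The hypothesis $r_{n+1}/r_n \to 1$ says exactly that $\log(r_n/r_{n+1}) \to 0$, while $r_n \to 0$ gives $\log r_n \to -\f$; hence past any fixed radius the tail radii $\{\lambda r_n\}$ form an arbitrarily long, relatively dense sequence. Concretely, letting $m_k$ be the number of indices $n$ with $\lambda r_n \in [s_{k+1}, s_k]$, the slow-decay hypothesis forces $m_k \to \f$, uniformly for $\lambda \in [1,2]$. This divergence of $m_k$ is the engine of the whole argument.

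At each level $k$ I delete an independent random family $F_k$ of balls of relative volume $\delta_k$, taken small enough at scale $s_k$ that the $m_k$ relevant points of a copy fall into distinct cells whose deletions are independent; choosing $\sum_k \delta_k < \f$ keeps $\E\,\mu(E \cap Q) = |Q|\prod_k (1-\delta_k) > 0$, so $\mu(E) > 0$ with positive probability, where $E := \R^d \setminus \bigcup_k F_k$. For the uncountable family of copies I discretize at each level into a finite net $\mathcal{N}_k$ of parameters $(\lambda,O,t)$ in the reduced region, of cardinality $|\mathcal{N}_k| \lesssim \eta_k^{-D}$ for a fixed $D = D(d)$ accounting for $t$, $\lambda$ and $O$, and I require each net-copy to be \emph{robustly} blocked --- some level-$k$ point landing well inside a deleted ball --- so that every genuine copy within $\eta_k$ of it is blocked too. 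As the $m_k$ level-$k$ points meet the independent deletions with probability $\asymp \delta_k$ apiece, a fixed net-copy fails to be robustly blocked with probability $q_k \lesssim (1-c\delta_k)^{m_k}$. If $\sum_k |\mathcal{N}_k|\, q_k < \f$, a Borel--Cantelli argument gives that almost surely every net-copy at every large level is robustly blocked; hence every actual copy is blocked at some scale, so $E$ contains no copy of $A$.

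The crux is to balance $s_k, \delta_k, \eta_k, m_k$ so that the two requirements $\sum_k \delta_k < \f$ and $\sum_k \eta_k^{-D}(1-c\delta_k)^{m_k} < \f$ hold simultaneously. The second is where the dimension and the hypothesis meet: larger $d$ inflates the net exponent $D=D(d)$ (the centers alone contribute $\eta_k^{-d}$, the rotations a further $\eta_k^{-d(d-1)/2}$), so the blocking factor $(1-c\delta_k)^{m_k}$ must decay fast, forcing $m_k$ to grow quickly compared with $\log(1/\eta_k)$. It is exactly the Eigen--Falconer hypothesis $r_{n+1}/r_n \to 1$ that makes $m_k$ --- the number of tail radii in a fixed multiplicative window --- as large as needed at every scale, letting the two summability conditions coexist. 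Pinning down the quantitative dependence of $m_k$ on the hypothesis, and verifying that separating the $m_k$ points into independent cells (harder when the directions $\x_n/r_n$ cluster, e.g.\ when $A$ is nearly collinear) erodes neither the independence nor the density budget, is the delicate, dimension-sensitive part of the proof.
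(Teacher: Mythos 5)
Your overall strategy --- a random multi-scale deletion, a finite net of transformation parameters, and a union bound --- belongs to the same family as the paper's argument, which runs a Kolountzakis-style probabilistic construction (a random union of cells kept independently with probability $p_n$, a reduction of the transformations to a finite set, and a first-moment estimate plus Markov's inequality, with a final countable intersection over dilation ranges). The architectural differences (simultaneous deletions at all dyadic scales with Borel--Cantelli versus one random set per index with an intersection at the end) are not where the trouble lies.

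The genuine gap is at the crux you flag and then leave unresolved. Your engine is $m_k$, the raw number of radii $\lambda a_n$ in the dyadic window $[s_{k+1},s_k]$, and you assert that the corresponding $m_k$ points fall into distinct cells whose deletions are independent, so that each is blocked with probability $\asymp\delta_k$ independently. But the hypothesis $a_{n+1}/a_n\to1$ gives \emph{no lower bound} on the gaps between these radii: the ratios may tend to $1$ arbitrarily fast, so all $m_k$ points can cluster in an interval far smaller than $s_k/m_k$. Then either the independence claim fails, or the cells (and hence the net spacing $\eta_k$) must be taken so small that $\eta_k^{-D}$ overwhelms $(1-c\delta_k)^{m_k}$ and the second summability condition cannot be verified. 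What the argument actually needs is not a large raw count but a \emph{well-separated} subfamily $A_k'$ with $\#A_k'\to\infty$ and $-\log\delta(A_k')/\#A_k'\to0$, where $\delta(A_k')$ is the minimal gap normalized by the maximal norm; producing it is the real content of the theorem. The paper does this by fixing $\rho_n=1-e^{-\sqrt n}$, using $a_{k+1}/a_k>\rho_n$ in the tail to place at least one radius in each window $[\rho_n^{m+j},\rho_n^{m+j-1})$, selecting every \emph{other} such point to force pairwise separation at least $\rho_n^{m+2n-1}(1-\rho_n)$, and checking that $-\log\delta(A_n')/\#A_n'\leqslant \sqrt n/(n+1)-\tfrac{2n-1}{n+1}\log\rho_n\to0$. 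Without this selection and the accompanying quantitative balance between the count and the logarithm of the separation, your two summability conditions do not follow from the stated hypothesis; this balance is precisely what distinguishes the Eigen--Falconer condition from weaker density assumptions. (A secondary point: the reduction to $\lambda\in[1,2]$ "by self-similarity in law" also needs an explicit extra union over dyadic dilation ranges, but that is routine.)
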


Several subsequent papers attempted to weaken the condition (\ref{eq:Eigen-Falconer-condition}) \cite{Jasinski-1995,Kolountzakis-1997,Humke-Laczkovich-1998}.
Recently, Feng, Lai and Xiong \cite{Feng-Lai-Xiong-2024} showed that for a strictly decreasing sequence $A=\{a_n\}_{n=1}^\f$ converging to $0$, if (\ref{eq:Eigen-Falconer-condition}) holds, then there exists a compact set $E \subseteq \R$ with $\mu(E)>0$ such that $f(A) \not\subseteq \R$ for any bi-Lipschitz map $f:\R \to \R$.
They also proved that if \[\limsup_{n \to \f} \frac{a_{n+1}}{a_n} < 1,\] then for any measurable set $E\subseteq \R$ with $\mu(E)>0$, there exists a bi-Lipschitz map $f:\R \to \R$ such that $f(A) \subseteq E$.
Bourgain \cite{Bourgain-1987} proved that if $A_1,A_2,A_3 \subset \R$ are infinite sets, then the set $A_1 + A_2 + A_3 =\big\{ a_1 + a_2 + a_3: a_1 \in A_1, a_2 \in A_2, a_3 \in A_3 \big\}$ is an Erd\H{o}s set.
Using a probabilistic construction, Kolountzakis \cite{Kolountzakis-1997} can show that the sumset $\big\{ 2^{-n^\alpha} + 2^{-m^\alpha}: n, m \in \N \big\}$ is an Erd\H{o}s set for any $0< \alpha < 2$.
In the same paper, Kolountzakis proved that for any infinite set $A \subseteq \R$, there exists a measurable subset $E \subseteq \R$ with $\mu(E)>0$ such that the set $\big\{ (\lambda,t): \lambda A +t \subseteq E \big\}$ has two-dimensional Lebesgue measure zero, which can be viewed as the almost everywhere answer to the Erd\H{o}s similarity conjecture.

Analogous questions can be considered in higher dimensions.
We also use $\mu$ to denote the $d$-dimensional Lebesgue measure on $\R^d$, and let $\mathrm{GL}_d(\R)$ be the set of all $d\times d$ invertible matrices, which is identified with the set of all bijective linear transformations on $\R^d$.
For $T \in \mathrm{GL}_d(\R)$ and $\x \in \R^d$, the set $TA + \x:= \big\{ T \boldsymbol{a} + \x: \; \boldsymbol{a} \in A \big\}$ is called an \emph{affine copy} of $A$.
We present a generalized formulation of the Erd\H{o}s similarity conjecture in $\R^d$.

\noindent
\textbf{Generalized Erd\H{o}s Similarity Conjecture in $\R^d$}:
\emph{For an infinite set $A \subseteq \R^d$, there exists a measurable subset $E \subseteq \R^d$ with $\mu(E)>0$ such that $E$ contains no affine copies of $A$.}

It is worth pointing out that the generalized Erd\H{o}s similarity conjecture in $\R^d$ for some $d>1$ implies the original Erd\H{o}s similarity conjecture.
Suppose that the generalized Erd\H{o}s similarity conjecture in $\R^d$ holds for some $d>1$.
Let $A \subseteq \R$ be an infinite set. Define $\widetilde{A}=\{(a,0,\ldots,0): a \in A\} \subseteq \R^d$.
By assumption, there exists a measurable subset $E \subseteq \R^d$ with $\mu(E)>0$ such that $E$ contains no affine copies of $\widetilde{A}$.
For $\boldsymbol{y} \in \R^{d-1}$, define $E^{\boldsymbol{y}}=\{ x\in \R: (x,\boldsymbol{y}) \in E \}$.
By Fubini's theorem, we have \[ \mu(E) = \int_{\R^{d-1}} \mu(E^{\boldsymbol{y}}) \D \boldsymbol{y} >0, \]
where $\mu(E^{\boldsymbol{y}})$ denotes the $1$-dimensional Lebesgue measure of $E^{\boldsymbol{y}}$.
There must be $\boldsymbol{y}_0 \in \R^{d-1}$ such that $\mu(E^{\boldsymbol{y}_0})>0$.
Note that $\lambda A + t \subseteq E^{\boldsymbol{y}_0}$ if and only if $T \widetilde{A} + \x \subseteq E$ for $T=\mathrm{diag}(\lambda,\ldots,\lambda)$ and $\x=(t,\boldsymbol{y}_0)$.
Thus, we conclude that $E^{\boldsymbol{y}_0}$ does not contain a similar copy of $A$.
This means that the original Erd\H{o}s similarity conjecture holds.

The main purpose of this paper is to generalize the Eigen-Falconer theorem to higher dimensions.
Let $\|\boldsymbol{x}\|$ denote the usual Euclidean norm of a vector $\x \in \R^d$.

\begin{theorem}\label{thm:general-Eigen-Falconer}
  Let $A=\{\x_n\}_{n=1}^\f \subseteq \R^d$ be a sequence of non-zero vectors.
  If \[ \lim_{n \to \f} \|\x_n\| =0 \quad \text{and} \quad \lim_{n \to \f} \frac{\|\x_{n+1}\|}{\|\x_n\|} = 1, \]
  then there exists a closed set $E \subseteq [0,1]^d$ with $\mu(E)>0$ such that $E$ contains no affine copies of $A$.
\end{theorem}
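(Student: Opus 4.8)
The plan is to construct $E$ by a probabilistic \emph{sieving} procedure: starting from $[0,1]^d$, I would delete, at each of a rapidly decreasing sequence of scales $r_1 > r_2 > \cdots \to 0$, a random open set of small Lebesgue measure, and take $E$ to be the closed set that survives. Concretely, at scale $k$ I would tile $[0,1]^d$ by cubes of side $r_k$, subdivide each into sub-cubes of side $\ep_k r_k$ with $\ep_k \to 0$, and delete each sub-cube independently with probability $\delta_k$. Choosing $\sum_k \delta_k < 1$ forces $\mu(E) \ge 1 - \sum_k \delta_k > 0$ deterministically, and $E$ is closed because we remove open sets; so the whole difficulty is to arrange, almost surely, that no affine copy survives.

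The analytic heart is a \emph{density lemma} extracting the consequence of the hypotheses. Writing $u_n = \x_n/\|\x_n\|$, for any $T \in \mathrm{GL}_d(\R)$ one has $\|T\x_n\| = \|\x_n\|\,\|Tu_n\|$ with $\|Tu_n\|$ trapped between the least and greatest singular values of $T$; together with $\|\x_n\| \to 0$ this gives $T\x_n \to 0$, so every copy $TA + \x$ accumulates at $\x$. The crucial point is that $\|\x_{n+1}\|/\|\x_n\| \to 1$ makes the norms $\|\x_n\|$ multiplicatively dense near $0$: for each fixed window $[\rho,2\rho]$ the count $M(\rho) = \#\{n : \|\x_n\| \in [\rho,2\rho]\}$ tends to $\f$ as $\rho \to 0$, the surviving norms sweeping the window without large gaps. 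Consequently, near its accumulation point $\x$, every copy $TA+\x$ has at least $M(\rho)$ points inside a shrinking annulus about $\x$ with $\rho=\rho_k \asymp r_k$, and -- because the norms spread across a full factor of $2$ -- these points occupy at least $c\,(M(\rho_k)\wedge\ep_k^{-1})$ distinct scale-$\ep_k r_k$ sub-cubes, a quantity that again tends to $\f$.

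With this in hand I would run a Borel--Cantelli argument. Fix a copy; at scale $k$ its points lie in at least $c\,(M(\rho_k)\wedge\ep_k^{-1})$ distinct sub-cubes, each surviving independently with probability $1-\delta_k$, so the probability the copy loses \emph{no} point at scale $k$ is at most $(1-\delta_k)^{\,c(M(\rho_k)\wedge\ep_k^{-1})}$. The tension is that $\sum_k \delta_k < 1$ is needed for positive measure while these catch probabilities must stay large; this is resolved by exploiting $M(\rho_k)\to\f$, since for any sequence tending to infinity one can choose a summable $\delta_k$ with $\sum_k \delta_k\,(M(\rho_k)\wedge\ep_k^{-1}) = \f$ (e.g. by a block construction). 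The scales being independent, the copy almost surely loses a point at infinitely many scales and is not contained in $E$. Passing from one copy to all copies is done by a countable decomposition: I would cover $\mathrm{GL}_d(\R) \times [0,1]^d$ by countably many compact pieces on which the singular values of $T$ lie in fixed dyadic ranges, making the density-lemma constants uniform; on each piece the event ``copy caught at scale $k$'' is open in the parameters, a finite subcover reduces the piece to finitely many parameter-balls, and the robustness of deletion (a deleted sub-cube swallowing one copy swallows all nearby copies) upgrades the per-copy statement to all copies in the piece, almost surely. A countable union then yields, almost surely, a closed $E \subseteq [0,1]^d$ with $\mu(E)>0$ containing no affine copy of $A$.

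The step I expect to be the main obstacle is controlling the general linear map $T$: unlike the one-dimensional case, where $T$ is a single bounded scalar, here $T$ ranges over the non-compact group $\mathrm{GL}_d(\R)$ and may shear, rotate, or anisotropically stretch $A$, so neither can the family of copies be reduced to a single compact set nor can the abundant points be assumed to spread out in $\R^d$. This bites in two places: (i) guaranteeing that the points near $\x$ genuinely occupy many distinct sub-cubes -- clustering can occur only when the $\x_n$ agree in both norm and direction, which the factor-of-$2$ spread of the norms forbids, while on each compact piece a lower bound on $\sigma_{\min}(T)$ prevents $T$ from collapsing this spread; and (ii) keeping the uniform constants and the finite subcover valid as the singular-value ranges degenerate, which is precisely why the dyadic decomposition into countably many compact pieces, rather than one global compactness statement, is the correct bookkeeping.
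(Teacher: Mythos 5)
There is a genuine gap, and it sits in the step you dismiss most quickly: passing from one copy to all copies. Your plan is a union bound over a net of the parameter space $\mathrm{GL}_d(\R)\times[0,1]^d$. For ``a deleted sub-cube swallowing one copy swallows all nearby copies'' to hold at scale $k$, the net must have mesh comparable to the sub-cube side $\ep_k r_k$, so the translation coordinate alone forces at least $(\ep_k r_k)^{-d}$ net points, whose logarithm is at least $d\log(1/r_k)$. Against this you can only spend the exponent $\sum_{j}\delta_j N_j\leqslant\big(\sup_j N_j\big)\sum_j\delta_j$, and since you need $\sum_j\delta_j<\f$ for positive measure and $N_j\leqslant M(\rho_j)$ by construction, the total catch exponent is $O\big(\sup_{j\leqslant k}M(\rho_j)\big)$. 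The hypothesis $\|\x_{n+1}\|/\|\x_n\|\to 1$ gives \emph{no} lower bound on $M(\rho)$ relative to $\log(1/\rho)$: for $\|\x_n\|=e^{-n/\log n}$ one computes $M(\rho)\asymp\log\log(1/\rho)\ll\log(1/\rho)$, so the union bound over the translation net never drops below $1$ and the scheme proves nothing for such sequences. (Harvesting points from much wider annuli $[\rho,\rho^{1/2}]$ rather than $[\rho,2\rho]$ could in principle repair the count, but that is a different argument requiring its own separation analysis, and your proposal neither does this nor weighs the net size against the catch probability at all.) The paper avoids the problem by never taking a union bound over the translation $\x$: it bounds, via Fubini, the \emph{expected Lebesgue measure} of the set $V_n$ of bad translations --- the union bound is taken only over a discretization of the matrix variable, built from a hyperplane arrangement (Lemma \ref{lemma:finiteness}) of size polynomial in $1/\delta(A_n)$ and $\#A_n$, which is affordable precisely because $-\log\delta(A_n)\ll\#A_n$ --- and then deletes a small open neighbourhood of $V_n$. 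A copy whose translation falls in the deleted neighbourhood is excluded automatically because $\x$ is an accumulation point of $TA+\x$ and the final set is closed (equivalently, one first adjoins $\boldsymbol{0}$ to $A$). Some substitute for this Fubini-plus-accumulation-point device is needed; the net argument as written cannot reach the full strength of the theorem.

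Two secondary issues. First, ``$\sum_k\delta_k<1$ forces $\mu(E)\geqslant 1-\sum_k\delta_k$ deterministically'' is false: the measure removed at scale $k$ is random with expectation $\delta_k$ but can equal $1$; you only get $\E\,\mu(E)\geqslant\prod_k(1-\delta_k)>0$ and must then intersect a positive-probability event with your almost-sure one. Second, the claim that the points in the annulus occupy $\gtrsim M(\rho_k)\wedge\ep_k^{-1}$ distinct sub-cubes requires explicitly selecting a sub-family whose \emph{norms} are pairwise quantitatively separated (norms can cluster inside the window even though they sweep it); this is fixable via $\|T\x_n-T\x_m\|\geqslant\|T\|_{*}\,\big|\,\|\x_n\|-\|\x_m\|\,\big|$ and the paper does exactly this in the proof of Theorem \ref{thm:general-Eigen-Falconer} by taking one point from every other interval $[\rho^{j+1},\rho^{j})$, but it is not automatic from the density statement as you phrase it.
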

\begin{remark}
  The Eigen-Falconer theorem is a corollary of Theorem \ref{thm:general-Eigen-Falconer}, and we will establish a slightly more general result (Theorem \ref{thm:general-Kolountzakis}).
\end{remark}

We illustrate Theorem \ref{thm:general-Eigen-Falconer} in $\R^2$ by an example.

\begin{example}
  Let $\{a_n\}_{n=1}^\f \subseteq \R$ be a positive sequence converging to $0$ and suppose that
  \[ \lim_{n \to \f} \frac{a_{n+1}}{a_n} = 1. \]
  For $n\in \N$, choose arbitrarily an point $(x_n,y_n) \in \R^2$ such that $x_n^2 + y_n^2 = a_n^2$.
  Let $A=\big\{ (x_n,y_n) \big\}_{n =1}^\f$.
  Then by Theorem \ref{thm:general-Eigen-Falconer}, there exists a closed set $E \subseteq \R^2$ with $\mu(E)>0$ such that $E$ contains no affine copies of $A$.
\end{example}

The rest of this paper is organized as follows.
In Section \ref{sec:general-Kolountzakis}, we state a slightly more general result (Theorem \ref{thm:general-Kolountzakis}), and prove Theorems \ref{thm:general-Eigen-Falconer} and \ref{thm:general-Kolountzakis} by assuming a key proposition (Proposition \ref{prop:main}).
The proof of Proposition \ref{prop:main} will be given in Section \ref{sec:proof-prop}.

\section{A generalization of Kolountzakis's result}\label{sec:general-Kolountzakis}

Let $\#A$ denote the cardinality of a set $A$.
For a finite set $A \subseteq \R^d$ with $\# A \geqslant 2$, define
\[ \delta(A) := \frac{\min\{\|\boldsymbol{x} -\boldsymbol{y}\|: \boldsymbol{x} \ne \boldsymbol{y} \in A \}}{\max\{\|\boldsymbol{z}\|: \boldsymbol{z} \in A\} }.  \]
Then we have $\delta(A) \leqslant 2$.
If $\{A_n\}_{n=1}^\f$ is a sequence of finite subsets of $\R^d$ with $\# A_n \to +\f$ as $n \to \f$, then \[ \lim_{n \to \f} \delta(A_n) =0. \]
The following theorem is a generalization of Kolountzakis's result \cite[Theorem 3]{Kolountzakis-1997} in higher dimensions.

\begin{theorem}\label{thm:general-Kolountzakis}
Let $A\subseteq\R^d$ be a bounded infinite set.
Suppose that there exists a sequence $\{A_n\}_{n=1}^\f$ of finite subsets of $A$ such that
\begin{equation}\label{eq:condition-A-n}
  \lim_{n \to \f} \# A_n = +\f \quad \text{and} \quad \lim_{n \to \f} \frac{-\log \delta(A_n)}{\# A_n} = 0.
\end{equation}
Then there exists a closed set $E\subseteq[0,1]^d$ with $\mu(E)>0$ such that $E$ contains no affine copies of $A$.
\end{theorem}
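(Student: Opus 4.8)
The plan is to build $E$ as the complement in $[0,1]^d$ of a random union of small ``holes'' placed at a whole hierarchy of scales, and to show that the resulting closed set has positive measure while, with positive probability, no affine copy of $A$ survives inside it. The engine of the argument is the trivial but decisive remark that $TA+\x\subseteq E$ forces $TA_n+\x\subseteq E$ for \emph{every} $n$; hence it suffices to defeat, separately at each scale of $T$, the affine copies of a single finite set $A_{n}$ whose cardinality $\#A_n$ can be taken as large as the hypothesis (\ref{eq:condition-A-n}) permits. Throughout I only range over $(T,\x)\in\mathrm{GL}_d(\R)\times\R^d$ with $TA+\x\subseteq[0,1]^d$, so that $\mathrm{diam}(TA)\le\sqrt d$ and the scale of each admissible copy is pinned down from above.

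First I would fix the random set. For a rapidly decreasing sequence $\rho_j\downarrow 0$ I partition $[0,1]^d$ into a grid of cubes of side $\rho_j$, and inside each grid cube I delete an independently and uniformly placed open sub-cube of relative side $\ep_j$; set $U_j$ equal to the union of the deleted cubes, $U=\bigcup_j U_j$, and $E=[0,1]^d\setminus U$. Since $U$ is open, $E$ is closed. The measure deleted at scale $j$ is exactly $\ep_j^{\,d}$, so by subadditivity $\mu(E)\ge 1-\sum_j\ep_j^{\,d}$ for \emph{every} realization; choosing the $\ep_j$ summable with $\sum_j\ep_j^{\,d}<1$ makes $\mu(E)>0$ deterministically. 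It then remains only to show that with positive probability $E$ contains no affine copy of $A$.

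The heart of the proof is a union bound over a discretization of the parameter space. I would sort the admissible $(T,\x)$ into scale bands indexed by $j$, assign to band $j$ an index $n=n(j)$ drawn from a subsequence so that $\#A_{n(j)}$ is as large as needed, and match the resolution $\rho_j$ to the geometry of the image $TA_{n(j)}$ so that its $\#A_{n(j)}$ points occupy distinct grid cubes. Because the holes in distinct cubes are independent, the events ``the $i$-th point avoids its hole'' are independent and each has probability $1-\ep_j^{\,d}$, so a fixed discretized copy survives with probability at most $(1-\ep_j^{\,d})^{\#A_{n(j)}}\le\exp(-\ep_j^{\,d}\,\#A_{n(j)})$. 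Identifying two copies whose point configurations agree up to the grid resolution, the number of essentially distinct copies in band $j$ is bounded by $\exp\!\big(Cd\,j+Cd^2(-\log\delta(A_{n(j)}))\big)$, the first term counting admissible translations and the second counting shapes and orientations at resolution $\rho_j$. A union over bands then gives
\[
\PP\big(E\text{ contains an affine copy of }A\big)\;\le\;\sum_{j}\exp\!\Big(Cd\,j+Cd^2\big(-\log\delta(A_{n(j)})\big)-\ep_j^{\,d}\,\#A_{n(j)}\Big).
\]
Here hypothesis (\ref{eq:condition-A-n}) enters decisively: since $-\log\delta(A_n)/\#A_n\to 0$, I can select $n(j)$ so that $\ep_j^{\,d}\#A_{n(j)}$ dwarfs both $Cd\,j$ and $Cd^2(-\log\delta(A_{n(j)}))$, making the right-hand sum smaller than $1$. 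A realization achieving both $\mu(E)>0$ (automatic) and avoidance (positive probability) is then the desired set.

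The genuinely new difficulty, absent in the one-dimensional Eigen--Falconer setting where $T$ is a scalar dilation, is the bookkeeping forced by the full group $\mathrm{GL}_d(\R)$: a general $T$ is anisotropic and may compress $A$ toward a lower-dimensional configuration, so the minimal inter-point gap of the image $TA_{n(j)}$ can be far smaller than $2^{-j}\delta(A_{n(j)})$, inflating both the required fineness of $\rho_j$ and the number of admissible translations. Controlling this trade-off uniformly over all anisotropic and nearly degenerate $T$ -- that is, proving that the count of discretized copies in a band is still governed by $\delta(A_{n(j)})$ with the stated exponential order -- is the main obstacle, and it is exactly what the key proposition (Proposition \ref{prop:main}) is designed to supply.
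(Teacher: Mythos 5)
Your strategy coincides with the paper's at every structural joint: a random subset of $[0,1]^d$ built from a grid, a first-moment/union-bound computation in which the survival probability $\exp(-c\,\#A_{n})$ of a single discretized copy must beat an entropy term of order $d^2\big(-\log\delta(A_{n})\big)$ counting the discretized copies, the hypothesis (\ref{eq:condition-A-n}) supplying exactly this domination, and an exhaustion of $\mathrm{GL}_d(\R)$ by scale bands (the paper uses $\cals_{\alpha_k}^{1/\alpha_k}$ with $\alpha_k=4^{-k}$ and intersects the resulting closed sets). The genuine gap is the step you yourself flag and then defer: the claim that the admissible $(T,\x)$ in a band can be grouped into $\exp\big(Cdj+Cd^2(-\log\delta(A_{n(j)}))\big)$ classes over which a union bound can be run. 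Such a bound requires that survival of \emph{any} member of a class force an event of probability at most $(1-c\ep_j^{\,d})^{\#A_{n(j)}}$ attached to a \emph{fixed} representative; with your holes-placed-inside-cubes construction this fails as stated, because a point of one copy can sit just outside a hole while the corresponding point of a nearby copy sits inside it, so the survival event is not constant on a class. Repairing this forces you either to erode the holes by the class diameter and redo the count at the finer resolution $\ep_j\rho_j$ rather than $\rho_j$, or to do what the paper does: take the random set to be a union of whole open grid cubes kept independently with probability $p_n$, and partition matrix space by the finitely many hyperplanes $\{T:\ \boldsymbol{e}_\ell\cdot(T\x_i+\x)=j/L_n\}$ so that on each connected region of the complement every $T$ places every $T\x_i+\x$ in the \emph{same} open cube (Lemma \ref{lemma:finiteness}, with the region count controlled by Buck's theorem, Lemma \ref{lemma:partition-space}). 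That hyperplane-arrangement argument is the actual mathematical content of the higher-dimensional theorem, and your proposal asserts its conclusion rather than proving it. A smaller defect of the same construction: a hole placed uniformly \emph{strictly inside} a grid cube covers a point of the grid skeleton with probability $0$, so an affine copy of $A$ landing on the grid hyperplanes survives almost surely; the paper sidesteps this by retaining a union of open cubes, which excludes grid points automatically.

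On the other hand, if the discretization were carried out, your route would be cleaner than the paper's in one respect. Proposition \ref{prop:main} only shows that the set $V_n$ of bad translations has small outer measure, not that it is empty; to convert this into genuine avoidance the paper must delete an open neighborhood of $V_n$, invoke $\boldsymbol{0}\in A$ so that $\x\in TA+\x$ (which is what makes the deletion kill the copy), and then dispose of the cases $\boldsymbol{0}\notin A$ by a separate three-way argument (adjoining $\boldsymbol{0}$ when it is an accumulation point, translating by some $\boldsymbol{a}_0\in A$ otherwise, with a check that $\delta(A_n-\boldsymbol{a}_0)$ remains controlled). Your direct union bound over both $T$ and $\x$, showing that with positive probability \emph{no} copy survives, would bypass all of that. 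But as written the proposal leaves the central lemma unproved.
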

\begin{remark}
  {\rm(a)} If $A=\{\x_n\}_{n=1}^\f \subseteq \R^d$ is a sequence of non-zero vectors satisfying (\ref{eq:condition-A-n}), and $\{\|\x_n\|\}_{n=1}^\f$ is strictly decreasing, then we have
  \begin{equation}\label{eq:limsup-1}
    \limsup_{n \to \f} \frac{\|\x_{n+1}\|}{\| \x_n\|} = 1.
  \end{equation}
  This can be derived by a contradiction argument.
  Suppose that (\ref{eq:limsup-1}) does not hold. Then there exists $0< \rho < 1$ such that \[ \frac{\|\x_{n+1}\|}{\| \x_n\|} \leqslant \rho \quad \forall n \in \N. \]
  It follows that $\|\x_{n+k}\| \leqslant \rho^k \|\x_n\|$ for $n \in \N$ and $k \in \N$.
  Let $F\subseteq A$ be a finite subset. Write $F=\big\{ \x_{n_1}, \x_{n_2}, \ldots, \x_{n_k} \big\}$, where $n_1 < n_2 < \cdots < n_k$.
  For $1\leqslant i < j \leqslant n_k$, we have $\|\x_{n_i} - \x_{n_j}\| \leqslant \|\x_{n_i}\| +\| \x_{n_j}\| \leqslant \big( \rho^{n_i-n_1} + \rho^{n_j - n_1}\big) \|\x_{n_1}\| \leqslant (\rho^{i-1} + \rho^{j-1}) \|\x_{n_1}\|$.
  Thus, we obtain that $\delta(F) \leqslant \rho^{k-2} + \rho^{k-1} \leqslant 2 \rho^{k-2}$.
  It follows that \[ \frac{-\log \delta(F)}{\#F} \geqslant - \log \rho + \frac{2\log \rho - \log 2}{\#F}, \]
  which contradicts (\ref{eq:condition-A-n}).
  Thus, we obtain (\ref{eq:limsup-1}).

  {\rm(b)} We construct an example in $\R$ that satisfies Theorem \ref{thm:general-Kolountzakis} but not Theorem \ref{thm:general-Eigen-Falconer}, see Example \ref{example-2} for an example in $\R^2$.
  Choose two sequences $\{r_n\}_{n=1}^\f$ and $\{\rho_n\}_{n=1}^\f$ in $(0,1)$ such that $r_n \searrow 0$, $\rho_n \nearrow 1$, and
  \begin{equation}\label{eq:condition-rho-n}
    \lim_{n \to \f} \frac{\log (1-\rho_n)}{n} =0.
  \end{equation}
  For $n \in \N$, let $A_n = \big\{ r_1r_2 \cdots r_n \rho_1 \rho_2^2 \cdots \rho_{n-1}^{n-1} \rho_n^k \big\}_{k=0}^{n}$. Note that $\delta(A_n) = \rho_n^{n-1} - \rho_n^n$.
  By (\ref{eq:condition-rho-n}), we have \[ \lim_{n \to \f} \frac{-\log \delta(A_n)}{\# A_n} =0. \]
  Thus, the set $A = \bigcup_{n=1}^\f A_n$ satisfies (\ref{eq:condition-A-n}).
  Note that $r_n \to 0$. For any strictly decreasing sequence $\{a_n\}_{n=1}^\f \subseteq A$, we have \[ \liminf_{n\to\f}\frac{a_{n+1}}{a_n} =0. \]
\end{remark}

Assuming Theorem \ref{thm:general-Kolountzakis}, we can prove Theorem \ref{thm:general-Eigen-Falconer} now.
The following argument is similar with that in \cite[Subsection 4.3]{Kolountzakis-1997}.

\begin{proof}[Proof of Theorem \ref{thm:general-Eigen-Falconer}]
  Fix $n \in \N$, and let $\rho_n = 1-e^{-\sqrt{n}} $.
  Since
  \begin{equation*}
    \lim_{k\to\infty}\frac{\|\boldsymbol{x}_{k+1}\|}{\|\boldsymbol{x}_k\|}=1>\rho_n,
  \end{equation*}
  we can find $k_0 \in \N$ such that
  \begin{equation*}
    \frac{\|\boldsymbol{x}_{k+1}\|}{\|\boldsymbol{x}_k\|}>\rho_n \quad \forall k\geqslant k_0.
  \end{equation*}
  Choose $m \in \N$ such that $\rho_n^{m}\leqslant\|\boldsymbol{x}_{k_0}\|$.
  For each $j\in \N$, the interval $[\rho_n^{m+j}, \rho_n^{m + j -1})$ contains at least one point in $\{\|\boldsymbol{x}_k\|\}_{k=1}^{\infty}$. So we can choose a vector $\boldsymbol{a}_j$ from $A$ such that $\|\boldsymbol{a}_j\|\in[\rho_n^{m+j},\rho_n^{m+j-1})$.
  Let
  \begin{equation*}
    A_n=\{\boldsymbol{a}_1, \boldsymbol{a}_3, \ldots, \boldsymbol{a}_{2n-1}, \boldsymbol{a}_{2n+1}\}.
  \end{equation*}
  Then we have $\# A_n = n+1$, and
  \begin{align*}
    \delta(A_n) & = \frac{\min\big\{\|\boldsymbol{a}_{2i-1}-\boldsymbol{a}_{2j-1}\|:\ 1\leqslant i<j\leqslant n+1\big\}}{\max\big\{\|\boldsymbol{a}_{2\ell-1}\|:\ 1\leqslant \ell \leqslant n+1\big\}} \\
    & \geqslant \frac{\min\big\{\rho_n^{m+2i-1}-\rho_n^{m+2j-2}:\ 1\leqslant i<j\leqslant n+1\big\}}{ \rho_n^m} \\
    & = \frac{\rho_n^{m+2n-1} - \rho_n^{m+2n}}{\rho_n^m}= \rho_n^{2n-1}(1-\rho_n).
  \end{align*}

  It follows that
  \begin{equation*}
    \frac{-\log\delta(A_n)}{\# A_n}\leqslant -\frac{\log(1-\rho_n)}{n+1}-\frac{2n-1}{n+1}\log\rho_n\longrightarrow0 \;\;\;\; \text{as}\; n \to \f.
  \end{equation*}
  Thus, we obtain \[ \lim_{n \to \f} \frac{-\log\delta(A_n)}{\# A_n} = 0. \]
  By Theorem \ref{thm:general-Kolountzakis}, there exists a closed subset $E \subseteq [0,1]^d$ with $\mu(E) > 0$ such that $E$ contains no affine copies of $A$.
\end{proof}

For $T \in \mathrm{GL}_d(\R)$, define
\[ \|T\|^{*} :=\max_{\|\x\|=1} \|T\boldsymbol{x}\| \quad \text{and} \quad \|T\|_{*} :=\min_{\|\x\|=1} \|T\boldsymbol{x}\|. \]
Thus, we have \[ \|T\|_{*}\|\x\| \leqslant \| T \x \| \leqslant \|T\|^{*} \|\x\| \quad \forall \x \in \R^d. \]
For $0<\alpha<\beta$, define
\[ \cals_{\alpha}^{\beta} :=\big\{ T\in\mathrm{GL}_d(\R):\; \alpha < \|T\|_{*} \leqslant \|T\|^{*} < \beta \big\}. \]
The proof of Theorem \ref{thm:general-Kolountzakis} relies on the following key proposition.

\begin{proposition}\label{prop:main}
  Let $A\subseteq\R^d$ be a bounded infinite set, and suppose that there exists a sequence $\{A_n\}_{n=1}^\f$ of finite subsets of $A \setminus\{\boldsymbol{0}\}$ satisfying (\ref{eq:condition-A-n}).
  Then for any $0<\alpha<1$, there exists a sequence $\{E_n\}_{n=1}^{\infty}$ of open subsets of $[0, 1]^d$ such that
  \begin{equation*}
    \lim_{n \to \f} \mu(E_n) = 1 \quad \text{and} \quad \lim_{n \to \f} \mu^{*}(V_n) = 0,
  \end{equation*}
  where $V_n:=\big\{ \boldsymbol{x}\in [0,1]^d:\; \text{there exists}\; T\in\cals_{\alpha}^{1/\alpha}\; \text{such that}\; TA + \x \subseteq E_n \big\}$, and $\mu^{*}(V_n)$ is the Lebesgue outer measure of $V_n$.
\end{proposition}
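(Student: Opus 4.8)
The plan is to build each $E_n$ by a random construction at a single scale adapted to the finite set $A_n$, and then to tame the continuum of admissible transformations $T\in\cals_{\alpha}^{1/\alpha}$ by discretization together with a first-moment (union-bound) estimate. First I would reduce the problem from $A$ to $A_n$: since $A_n\subseteq A$, any inclusion $TA+\x\subseteq E_n$ forces $TA_n+\x\subseteq E_n$, so it suffices to make the larger set $V_n':=\{\x:\exists\,T\in\cals_{\alpha}^{1/\alpha},\ TA_n+\x\subseteq E_n\}$ have small outer measure. Write $R_n=\max_{\boldsymbol{z}\in A_n}\|\boldsymbol{z}\|$, so that the minimal gap of $A_n$ equals $\delta(A_n)R_n$. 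Fix a scale $h_n=1/K_n$ (with $K_n\in\N$) satisfying $h_n\sqrt{d}<\alpha\,\delta(A_n)R_n$, partition $[0,1]^d$ into the $K_n^d$ half-open cubes of side $h_n$, and declare each cube independently ``open'' with probability $1-p_n$. Let $E_n$ be the union, over the open cubes, of the concentric open sub-cube shrunk by a margin $\epsilon_n h_n$ on each side (with $\epsilon_n\to0$). Then $E_n$ is open, and $\E[\mu(E_n)]=(1-p_n)(1-2\epsilon_n)^d\to1$ provided $p_n,\epsilon_n\to0$.

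The heart of the argument is the bound on $V_n'$. Choose an $\eta_n$-net $\{T_1,\dots,T_{M_n}\}\subseteq\cals_{\alpha}^{1/\alpha}$ in the operator norm $\|\cdot\|^{*}$, with $\eta_n$ chosen so that $\eta_n R_n=\epsilon_n h_n$; since $\cals_{\alpha}^{1/\alpha}$ sits inside the ball $\{\|T\|^{*}<1/\alpha\}$ of the $d^2$-dimensional space of matrices, a volume count gives $M_n\leqslant (C/(\alpha\eta_n))^{d^2}$. The margin is tuned precisely so that the $(\eta_n R_n)$-neighborhood of $E_n$ is contained in the union $U_n$ of the full open cubes. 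Hence if $TA_n+\x\subseteq E_n$ and $\|T-T_j\|^{*}\leqslant\eta_n$, then $\|(T-T_j)\boldsymbol{a}\|\leqslant\eta_n R_n$ for every $\boldsymbol{a}\in A_n$, so $T_jA_n+\x\subseteq U_n$; this shows $V_n'\subseteq\bigcup_j\wt{W}_j$ with $\wt{W}_j:=\{\x:T_jA_n+\x\subseteq U_n\}$, a finite union of genuinely measurable (indeed open) sets. For fixed $j$ and fixed $\x$, the points $\{T_j\boldsymbol{a}+\x:\boldsymbol{a}\in A_n\}$ are pairwise at distance $\geqslant\|T_j\|_{*}\,\delta(A_n)R_n>\alpha\,\delta(A_n)R_n>h_n\sqrt{d}$, hence lie in $\#A_n$ distinct cubes; their ``openness'' events are therefore independent, giving $\PP(\x\in\wt{W}_j)\leqslant(1-p_n)^{\#A_n}$ (points pushed outside $[0,1]^d$ only lower this). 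Integrating over $\x$ and summing over $j$, $\E[\mu^{*}(V_n')]\leqslant M_n(1-p_n)^{\#A_n}$.

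It remains to choose the parameters and to invoke the first-moment method. Taking logarithms, $\log M_n\leqslant d^2\big(\log(1/\eta_n)+O(1)\big)$, and since $\eta_n=\epsilon_n h_n/R_n$ is comparable to $\epsilon_n\,\delta(A_n)$, we get $\log M_n\leqslant d^2\big({-}\log\delta(A_n)\big)+d^2\log(1/\epsilon_n)+O(1)$. Choosing $\epsilon_n\to0$ slowly (say $\epsilon_n=1/\#A_n$) and invoking hypothesis (\ref{eq:condition-A-n}) yields $\log M_n=o(\#A_n)$. I would then pick $p_n\to0$ with $p_n\,\#A_n-\log M_n\to+\infty$ (possible exactly because $\log M_n=o(\#A_n)$; e.g.\ $p_n=2(\log M_n+\sqrt{\#A_n})/\#A_n$), which forces $\E[\mu^{*}(V_n')]\leqslant M_n(1-p_n)^{\#A_n}\to0$. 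Finally, since both $\E[\mu([0,1]^d\setminus E_n)]\to0$ and $\E[\mu^{*}(V_n')]\to0$, Markov's inequality together with a union bound provides, for each $n$, a deterministic realization of the colouring for which simultaneously $\mu(E_n)\to1$ and $\mu^{*}(V_n)\leqslant\mu^{*}(V_n')\to0$, as required.

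I expect the main obstacle to be matching the discretization scale of the transformation group to the grid scale. The net fineness $\eta_n$ must be small enough (comparable to $\delta(A_n)$) that a net point $T_j$ cannot move a copy of $A_n$ out of the open cubes it was caught in, yet this is precisely what inflates the net cardinality to $\log M_n\asymp d^2(-\log\delta(A_n))$; the scheme closes only because (\ref{eq:condition-A-n}) guarantees $-\log\delta(A_n)=o(\#A_n)$, so that the entropy cost $M_n$ of the union bound is beaten by the gain $(1-p_n)^{\#A_n}$ coming from $\#A_n$ independent cubes. Secondary technical care is needed for boundary effects and for the measurability of $\mu^{*}(V_n')$, both handled by passing to the measurable majorant $\bigcup_j\wt{W}_j$.
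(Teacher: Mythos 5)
Your proposal is correct and follows the same overall probabilistic skeleton as the paper: a random union of grid cubes at scale comparable to $\alpha\,\delta(A_n)\max_{\boldsymbol{z}\in A_n}\|\boldsymbol{z}\|$, independence of the $\#A_n$ cube-events giving a factor exponentially small in $\#A_n$, a union bound over a finite family of transformations whose log-cardinality is $O\bigl(d^2(-\log\delta(A_n)+\log \#A_n)\bigr)=o(\#A_n)$, and then Fubini plus Markov to extract a deterministic realization. Where you genuinely diverge is in the key step of reducing $\cals_{\alpha}^{1/\alpha}$ to a finite set, which the paper identifies as the main new difficulty in higher dimensions. The paper (Lemma \ref{lemma:finiteness}) slices the matrix space $\mathrm{M}_d(\R)\cong\R^{d^2}$ by the hyperplanes $\{T:\boldsymbol{e}_\ell\cdot(T\boldsymbol{x}_i+\boldsymbol{x})=j/L_n\}$, invokes Buck's bound on the number of cells of a hyperplane arrangement, and takes one representative per connected component; this yields an \emph{exact} equivalence ($TA_n+\boldsymbol{x}\subseteq\cale_n(\boldsymbol{\omega})$ for some $T$ iff for some representative $S$), so no modification of the random set is needed. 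You instead use a metric $\eta_n$-net of $\cals_{\alpha}^{1/\alpha}$ combined with a shrinking margin, which forces a two-layer construction ($E_n$ made of shrunk cubes inside the union $U_n$ of full cubes) and gives only a one-sided implication, but is more elementary and avoids the connected-component argument entirely; both routes produce nets of the same entropy $d^2\log(1/\delta(A_n))+O(d^2\log\#A_n)$, which is all the first-moment computation requires. Two small points of hygiene in your version: with $\eta_n R_n$ \emph{equal} to $\epsilon_n h_n$ a perturbed point can land exactly on a cube face, so you should take $\eta_n R_n<\epsilon_n h_n$ strictly (or define $U_n$ with closed cubes) to preserve both the containment and the independence of the cube-events; and, as you note, one must bound $\E$ of the measurable majorant $\mu\bigl(\bigcup_j\wt{W}_j\bigr)$ rather than of $\mu^{*}(V_n')$ itself, exactly as the paper works with the Borel set $\mathcal{V}_n(\boldsymbol{\omega})$ and only passes to outer measure at the very end.
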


The detailed proof of Proposition \ref{prop:main} will be given in Section \ref{sec:proof-prop}.
Now we prove Theorem \ref{thm:general-Kolountzakis} by using Proposition \ref{prop:main}.

\begin{proof}[Proof of Theorem \ref{thm:general-Kolountzakis}]
Note that $\delta\big( A_n \setminus \{ \boldsymbol{0} \} \big) \geqslant \delta(A_n)$.
It follows from (\ref{eq:condition-A-n}) that \[ \lim_{n \to \f} \frac{-\log \delta\big( A_n \setminus \{ \boldsymbol{0} \} \big)}{\# \big( A_n \setminus \{ \boldsymbol{0} \} \big)} =0. \]
Thus, we can always assume that $\boldsymbol{0} \not\in A_n$ for all $n \in \N$.

We first assume that $\boldsymbol{0} \in A$.
Fix $0< \alpha < 1$, and let $\{E_n\}_{n=1}^\f$ and $\{V_n\}_{n=1}^\f$ be defined in Proposition \ref{prop:main}.
For $n\in\N$, by the inner regularity of the Lebesgue measure, there exists a closed set $F_n\subseteq E_n$ with $\mu(F_n)>\mu(E_n)-\frac{1}{n}$, and by the definition of the Lebesgue outer measure, we can find an open set $U_n\supseteq V_n$ with $\mu(U_n)<\mu^{*}(V_n)+\frac{1}{n}$.
Let $\widetilde{E}_n=F_n\setminus U_n$.
Then, $\widetilde{E}_n$ is a closed subset of $E_n$ and
\begin{equation}\label{eq:measure-tilde-E}
  \mu\big( \widetilde{E}_n \big) > \mu(E_n) - \mu^{*}(V_n) - \frac{2}{n}.
\end{equation}
For $T\in\cals_{\alpha}^{1/\alpha}$ and $\boldsymbol{x}\in \R^d$, if $\x \in V_n$ or $\x \not\in [0,1]^d$, then $\x \not\in \widetilde{E}_n$ and it follows that $TA + \x\not\subseteq\widetilde{E}_n$ because $\boldsymbol{0}\in A$; if $\x \in [0,1]^d \setminus V_n$, then we have $TA +\x \not \subseteq E_n$, which implies $TA+\x\not\subseteq\widetilde{E}_n$.
Thus, we conclude that $TA+\x\not\subseteq\widetilde{E}_n$ for any $T\in\cals_{\alpha}^{1/\alpha}$ and any $\boldsymbol{x}\in \R^d$.
By Proposition \ref{prop:main} and (\ref{eq:measure-tilde-E}), we have \[ \lim_{n \to \f} \mu\big( \widetilde{E}_n \big)=1. \]
By choosing a large enough integer, we can obtain a closed subset $E_\alpha$ of $[0,1]^d$ with $\mu(E_\alpha)>1-\alpha$ such that $TA+\x\not\subseteq E_\alpha$ for any $T\in\cals_{\alpha}^{1/\alpha}$ and any $\boldsymbol{x}\in \R^d$.

For $k \in \N$, let $\alpha_k = 1/4^k$.
By the previous argument, we obtain a sequence $\{E_{\alpha_k}\}_{k=1}^{\infty}$ of closed subsets of $[0,1]^d$ satisfying $\mu\big(E_{\alpha_k}\big)>1-4^{-k}$ and
\[ TA +\x \not\subseteq E_{\alpha_k}\;\; \forall T\in\cals_{\alpha_k}^{1/\alpha_k}\; \forall\boldsymbol{x}\in\R^d. \]
We claim that the intersection
\begin{equation*}
  E=\bigcap_{k=1}^{\infty}E_{\alpha_k}
\end{equation*}
is the desired set. To see this, we first have that $E$ is a closed subset of $[0, 1]^d$, and
\begin{align*}
  \mu(E) = 1 - \mu\big([0, 1]^d\setminus E\big) \geqslant 1- \sum_{k=1}^{\infty}\mu\big([0, 1]^d\setminus E_{\alpha_k}\big) \geqslant 1 - \sum_{k=1}^{\f} \frac{1}{4^k} = \frac{2}{3}.
\end{align*}
For $T\in\mathrm{GL}_d(\R)$ and $\boldsymbol{x}\in \R^d$, since $0< \|T\|_{*} \leqslant \|T\|^{*} < +\f$, we can find a sufficiently large integer $k \in \N$ such that $T \in \cals_{\alpha_k}^{1/\alpha_k}$.
Note that $TA+\x \not \subseteq E_{\alpha_k}$ and $E \subseteq E_{\alpha_k}$.
Thus, we have $TA+\x \not \subseteq E$.
That is, the set $E$ contains no affine copies of $A$.

Next, we assume that $\boldsymbol{0} \not\in A$.
If $\boldsymbol{0}$ is an accumulation point of $A$, then let $\widetilde{A} = A\cup\{ \boldsymbol{0} \}$.
By the previous argument, there exists a closed subset $E \subseteq [0,1]^d$ with $\mu(E) > 0$ such that $E$ contains no affine copies of $\widetilde{A}$.
Note that $E$ is closed, and $\mathbf{0}$ is an accumulation point of $A$.
Thus, the set $E$ contains no affine copies of $A$.

If $\boldsymbol{0}$ is not an accumulation point of $A$, then there exists $C>0$ such that $\|\boldsymbol{a}\|\geqslant C$ for all $\boldsymbol{a} \in A$.
Choose $\boldsymbol{a}_0 \in A$ and let $\widetilde{A}= A-\boldsymbol{a}_0$ and $\widetilde{A}_n = A_n - \boldsymbol{a}_0$.
Clearly, we have $\#\widetilde{A}_n = \#A_n$.
Note that $\max\{\|\boldsymbol{z}\|: \boldsymbol{z} \in A_n \} \geqslant C$.
We have \[ \max\{\|\boldsymbol{z}\|: \boldsymbol{z} \in \widetilde{A}_n \} \leqslant \max\{\|\boldsymbol{z}\|: \boldsymbol{z} \in A_n \} + \|\boldsymbol{a}_0\| \leqslant \bigg( 1 + \frac{\|\boldsymbol{a}_0\|}{C} \bigg) \max\{\|\boldsymbol{z}\|: \boldsymbol{z} \in A_n \}. \]
Let $\widetilde{C} = C/(C+\|\boldsymbol{a}_0\|)$. Then we obtain $\delta(\widetilde{A}_n) \geqslant \widetilde{C} \delta(A_n)$.
It follows that $-\log \delta(\widetilde{A}_n) \leqslant - \log \delta(A_n) - \log \widetilde{C}$.
Thus, by (\ref{eq:condition-A-n}), we have \[ \lim_{n \to \f} \frac{-\log \delta(\widetilde{A}_n)}{\# \widetilde{A}_n }=0. \]
By the previous argument, there exists a closed subset $E \subseteq [0,1]^d$ with $\mu(E) > 0$ such that $E$ contains no affine copies of $\widetilde{A}$.
Clearly, the set $E$ contains no affine copies of $A$.
We complete the proof.
\end{proof}

Finally, we give an example in $\R^2$.
\begin{example}\label{example-2}
  Let $\{ a_n \}_{n=1}^\f$ be an arbitrary positive sequence.
  For $n \in \N$, let $A_n$ be the vertices of an inscribed equilateral $(n+1)$-polygon of the circle $\big\{ (x,y) \in \R^2: x^2+y^2 = a_n^2 \big\}$.
  It is easy to calculate that $\delta(A_n) = 2 \sin \frac{\pi}{n+1}$.
  So we have \[ \lim_{n \to \f} \frac{-\log \delta(A_n)}{\# A_n} = \lim_{n \to \f} -\frac{1}{n+1} \log \bigg( 2\sin \frac{\pi}{n+1} \bigg) = 0.  \]
  Let $A= \bigcup_{n=1}^\f A_n$.
  Then there exists a measurable set $E \subseteq \R^d$ with $\mu(E)>0$ such that $E$ contains no affine copies of $A$.
  If $A$ is unbounded, then the conclusion is clear; if $A$ is bound, then the conclusion follows from Theorem \ref{thm:general-Kolountzakis} directly.
  Take $a_n = 1/2^n$ and perturb each point in $A_n$ slightly so that all vectors in $A$ have distinct norms. This yields an example $A \subseteq \R^2$ that satisfies Theorem \ref{thm:general-Kolountzakis} but not Theorem \ref{thm:general-Eigen-Falconer}. 
\end{example}

\section{Proof of Proposition \ref{prop:main}}\label{sec:proof-prop}

In this section, we will prove Proposition \ref{prop:main}.
We always assume that $A\subseteq\R^d$ is a bounded infinite set, and there exists a sequence $\{A_n\}_{n=1}^\f$ of finite subsets of $A \setminus \{\boldsymbol{0}\}$ satisfying (\ref{eq:condition-A-n}).
Write $k_n = \#A_n$ and $\delta_n = \delta(A_n)$ for $n \in \N$.
Then we have
\begin{equation}\label{eq:condition-A-n-2}
  \lim_{n \to \f} k_n = +\f \quad \text{and} \quad \lim_{n \to \f} \frac{-\log \delta_n}{k_n} = 0.
\end{equation}
We also fix $0<\alpha < 1$ in the following.

The proof of Proposition \ref{prop:main} is based on a probability construction developed by Kolountzakis in \cite{Kolountzakis-1997}.
The main difficulty we met in higher dimensions is to reduce the set $\cals_{\alpha}^{1/\alpha}$ to a finite set.
To this end, we partition the set of all $d\times d$ real matrices by some hyperplanes and take a representative element from each connected component (see Lemma \ref{lemma:finiteness}).

A \emph{hyperplane} of $\R^d$ is a $(d-1)$-dimensional affine subspace of $\R^d$, which can be defined by \[ H=\left\{\boldsymbol{x}\in\R^d:\ \boldsymbol{v}\cdot\boldsymbol{x}+b=0\right\}, \]
where $\boldsymbol{v}\in \R^d$ with $\|\boldsymbol{v}\|=1$ and $b\in\R$.
We say that \emph{$\boldsymbol{y}, \boldsymbol{z} \in \R^d$ lie in the same side of $H$} if $\boldsymbol{v}\cdot\boldsymbol{y}+b$ and $\boldsymbol{v}\cdot\boldsymbol{z}+b$ have the same sign.
For $\ell\in\{1, 2, \ldots, d\}$ and $b \in \R$, define
\begin{equation}\label{eq:hyperplane-H-ell-b}
  H_{\ell,b} :=\R^{\ell-1}\times\{b\}\times\R^{d-\ell} = \big\{ \boldsymbol{x}\in\R^d:\ \boldsymbol{e}_{\ell}\cdot\boldsymbol{x} - b=0 \big\},
\end{equation}
where $\boldsymbol{e}_{\ell} =(0, \ldots, 0, 1, 0, \ldots, 0)$ is the $\ell$-th standard orthonormal basis with $1$ at the $\ell$-th position.

For $n\in\N$, define $M_{n}:=\max\{\|\boldsymbol{x}\|: \boldsymbol{x}\in A_n\}$ and \[ L_{n}:=\left\lceil\frac{d}{\alpha M_n\delta_n}\right\rceil, \] where $\lceil x\rceil$ denotes the smallest integer larger than or equal to $x$.
We divide the unite hypercube $[0,1]^d$ into open sub-hypercubes with side length $1/L_n$.
For $j_1,j_2, \ldots, j_d\in\{0,1,\ldots, L_{n}-1\}$, define
\begin{equation*}
  I_{j_1,j_2, \ldots, j_d}(n):=\left(\frac{j_1}{L_{n}}, \frac{j_1+1}{L_{n}}\right)\times\left(\frac{j_2}{L_{n}}, \frac{j_2+1}{L_{n}}\right)
  \times\cdots\times\left(\frac{j_d}{L_{n}}, \frac{j_d+1}{L_{n}}\right).
\end{equation*}
Let $\Omega_n$ be the set of $0$-$1$ sequences with length $(L_n)^d$.
Each element in $\Omega_n$ can be viewed as a map from open hypercubes $\big\{ I_{j_1,j_2, \ldots, j_d}(n):\; j_1,j_2, \ldots, j_d\in\{0,1,\ldots, L_{n}-1\} \big\}$ to $\{0,1\}$.
So, the element in $\Omega_n$ will be denoted by $\boldsymbol{\omega}=(\omega_{j_1,j_2, \ldots, j_d})_{0 \leqslant j_1,j_2, \ldots, j_d\leqslant L_n-1}$.

For $\boldsymbol{\omega}=(\omega_{j_1,j_2, \ldots, j_d})_{0 \leqslant j_1,j_2, \ldots, j_d\leqslant L_n-1}\in\Omega_n$, define
\[ \cale_n(\boldsymbol{\omega}):=\bigcup_{\substack{0\leqslant j_1,j_2, \ldots, j_d\leqslant L_n-1\\ \omega_{j_1,j_2, \ldots, j_d}=1}} I_{j_1,j_2, \ldots, j_d}(n). \]
Then, $\cale_n$ is a map from $\Omega_n$ to open subsets of $[0, 1]^d$.
For $\boldsymbol{x}\in[0, 1]^d$, define
\begin{equation}\label{eq:def-W-x-n}
  W_{\boldsymbol{x},n}:= \big\{\boldsymbol{\omega}\in\Omega_n:\ \text{there exists}\ T\in\cals_{\alpha}^{1/\alpha}\  \text{such that}\ TA_n + \boldsymbol{x} \subseteq\cale_n(\boldsymbol{\omega})\big\}.
\end{equation}
We first reduce the set $\cals_{\alpha}^{1/\alpha}$ in the definition of $W_{\boldsymbol{x},n}$ in (\ref{eq:def-W-x-n}) to a finite set.

\begin{lemma}\label{lemma:finiteness}
  For $n\in\N$ and $\boldsymbol{x}\in[0, 1]^d$, there exists a finite subset $\cals_n(\boldsymbol{x})\subseteq\cals_{\alpha}^{1/\alpha}$ such that
  \begin{equation}\label{eq:reduce-W-x-n-finite}
    W_{\boldsymbol{x},n}=\big\{\boldsymbol{\omega}\in\Omega_n:\ \text{there exists}\ S\in\cals_n(\boldsymbol{x})\ \text{such that}\ SA_n + \boldsymbol{x} \subseteq\cale_n(\boldsymbol{\omega})\big\},
  \end{equation}
  and \[ \#\cals_n(\boldsymbol{x})\leqslant C(L_nM_nk_n)^{d^2}, \] where $C$ is a constant depending only on $\alpha$ and $d$.
\end{lemma}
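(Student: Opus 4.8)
The plan is to reduce the infinitely many matrices $T\in\cals_\alpha^{1/\alpha}$ occurring in (\ref{eq:def-W-x-n}) to a finite list of ``combinatorial types''. Writing $T=(t_{pq})$ and identifying it with a point of $\R^{d^2}$, each coordinate of a translated image,
\[ (T\boldsymbol{a}+\x)_\ell=\sum_{q=1}^{d}t_{\ell q}a_q+x_\ell,\qquad \boldsymbol{a}=(a_1,\dots,a_d)\in A_n, \]
is an affine function of $T$. Hence for $\boldsymbol{a}\in A_n$, $\ell\in\{1,\dots,d\}$ and $j\in\{0,1,\dots,L_n\}$ the set
\[ H_{\boldsymbol{a},\ell,j}:=\big\{T\in\R^{d^2}:\ (T\boldsymbol{a}+\x)_\ell=j/L_n\big\} \]
is an affine hyperplane of $\R^{d^2}$, nondegenerate because $\boldsymbol{a}\ne\boldsymbol{0}$. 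Which open sub-hypercube $I_{j_1,\dots,j_d}(n)$ a point $T\boldsymbol{a}+\x$ lies in --- or whether it lies on a grid boundary or outside $[0,1]^d$ --- is entirely determined by the signs of the numbers $(T\boldsymbol{a}+\x)_\ell-j/L_n$; consequently the condition $TA_n+\x\subseteq\cale_n(\boldsymbol{\omega})$ depends only on this finite sign pattern. I would therefore take $\cals_n(\x)$ to consist of one matrix, chosen inside $\cals_\alpha^{1/\alpha}$, realizing each sign pattern that actually occurs for some $T\in\cals_\alpha^{1/\alpha}$. Identity (\ref{eq:reduce-W-x-n-finite}) is then immediate: the inclusion ``$\supseteq$'' holds because $\cals_n(\x)\subseteq\cals_\alpha^{1/\alpha}$, and ``$\subseteq$'' because any witnessing $T$ shares its sign pattern with some $S\in\cals_n(\x)$, whence $TA_n+\x\subseteq\cale_n(\boldsymbol{\omega})$ forces $SA_n+\x\subseteq\cale_n(\boldsymbol{\omega})$ (a matrix lying on some $H_{\boldsymbol{a},\ell,j}$ places a point of $TA_n+\x$ on a grid boundary and so never witnesses membership in $W_{\x,n}$).

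The number of sign patterns realized in $\cals_\alpha^{1/\alpha}$ is bounded by the number of cells (connected components of the complement) of the hyperplane arrangement, so the task reduces to a cell count. The crude count over all $d\,k_n\,(L_n+1)$ hyperplanes, via the classical bound $\sum_{i=0}^{m}\binom{N}{i}$ for the number of cells of $N$ hyperplanes in $\R^{m}$ (here $m=d^2$), only gives $O\big((L_nk_n)^{d^2}\big)$, which misses the factor $M_n$. The gain comes from discarding hyperplanes that do not separate $\cals_\alpha^{1/\alpha}$: for $T\in\cals_\alpha^{1/\alpha}$ one has $\|T\boldsymbol{a}\|\le\|T\|^{*}\|\boldsymbol{a}\|<M_n/\alpha$, so $(T\boldsymbol{a}+\x)_\ell$ remains in an interval of length $2M_n/\alpha$, and any $H_{\boldsymbol{a},\ell,j}$ whose level $j/L_n$ lies outside it keeps all of $\cals_\alpha^{1/\alpha}$ strictly on one side and hence has constant sign there. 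This remains valid although $\cals_\alpha^{1/\alpha}$ is disconnected (it meets both components of $\mathrm{GL}_d(\R)$), because the omitted level lies beyond the range of the coordinate rather than merely missing the set. After dropping these, the realized patterns are already separated by the surviving ``relevant'' hyperplanes, so it suffices to count the cells of that smaller arrangement.

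For each $(\boldsymbol{a},\ell)$ the number of relevant hyperplanes is at most the number of grid levels in an interval of length $2M_n/\alpha$, i.e.\ at most $2M_nL_n/\alpha+1$; and since $L_n\ge d/(\alpha M_n\delta_n)$ with $\delta_n\le2$ forces $M_nL_n\ge d/(2\alpha)$, this is $\le C'M_nL_n$ with $C'=C'(\alpha,d)$. Summing over the $k_n\,d$ pairs gives $N_{\mathrm{rel}}\le C''k_nM_nL_n$, and the cell bound yields at most $(d^2+1)N_{\mathrm{rel}}^{d^2}\le C\,(L_nM_nk_n)^{d^2}$ cells, with $C=C(\alpha,d)$. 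Taking one representative in $\cals_\alpha^{1/\alpha}$ from each cell produces the required $\cals_n(\x)$. I expect the sharp count to be the crux: the plain arrangement estimate sees only $L_n$, and it is the operator-norm restriction $\|T\|^{*}<1/\alpha$ that confines each coordinate to a window of width $2M_n/\alpha$ and so replaces $L_n$ by $M_nL_n$; the inclusions in (\ref{eq:reduce-W-x-n-finite}) and the nondegeneracy of the $H_{\boldsymbol{a},\ell,j}$ are then routine.
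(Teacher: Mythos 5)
Your proposal is correct and follows essentially the same route as the paper: the same arrangement of hyperplanes $\{T:\ (T\boldsymbol{a}+\x)_\ell=j/L_n\}$ in $\R^{d^2}$, the same restriction to the $O(M_nL_n/\alpha)$ grid levels reachable under the constraint $\|T\|^{*}<1/\alpha$, the same Buck-type cell count, and one representative of $\cals_{\alpha}^{1/\alpha}$ per cell. The only (harmless) variation is in justifying that cell-mates land in the same sub-hypercube — you argue the discarded hyperplanes have constant sign on all of $\cals_{\alpha}^{1/\alpha}$, while the paper adds a $1/L_n$ slack to its window so that the two walls adjacent to $T\boldsymbol{x}_i+\x$ are always among the retained hyperplanes; both work.
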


To prove the lemma, we need the following lemma to estimate the number of connected components arising from partitioning $\R^d$ by its hyperplanes.
\begin{lemma}[Buck \cite{Buck-1943}]\label{lemma:partition-space}
  Let $\calh$ be the set of $n$ hyperplanes in $\R^d$. Then the number of connected regions of $\R^d \setminus \bigcup\calh$ is at most \[ \sum_{k=0}^{d}\binom{n}{k}, \]
  which has a trivial upper bound $(d+1)n^d$.
\end{lemma}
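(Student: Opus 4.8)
The plan is to establish the bound by a double induction, on the number $n$ of hyperplanes and on the dimension $d$, built around the classical recursion obtained by inserting the hyperplanes one at a time. Since $\calh$ is an arbitrary collection of $n$ hyperplanes, I would introduce the quantity $r(n,d)$, defined as the maximum over all configurations of $n$ hyperplanes in $\R^d$ of the number of connected components of the complement; it then suffices to prove $r(n,d)\le \sum_{k=0}^{d}\binom{n}{k}$. The base cases are immediate: $r(0,d)=1$ (the whole space is one region) and $r(n,0)=1$ (the space $\R^0$ is a single point carrying no hyperplanes), and these agree with $\sum_{k=0}^{d}\binom{0}{k}=1$ and $\binom{n}{0}=1$.

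The heart of the argument is the recursion
\[ r(n,d)\le r(n-1,d)+r(n-1,d-1). \]
To derive it, I fix an arrangement of $n$ hyperplanes and single out the last one, $H=H_n$. Removing $H$ leaves an arrangement of $n-1$ hyperplanes with at most $r(n-1,d)$ regions, and I then count the regions newly created when $H$ is reinstated. The remaining $n-1$ hyperplanes meet $H$ in at most $n-1$ affine subspaces of dimension $d-2$, that is, in at most $n-1$ hyperplanes of the $(d-1)$-dimensional space $H\cong\R^{d-1}$; hence they cut $H$ into at most $r(n-1,d-1)$ relatively open $(d-1)$-cells. Each such cell $F$ is connected and disjoint from $H_1,\ldots,H_{n-1}$, so $F$ lies in a single region $R$ of the arrangement of the first $n-1$ hyperplanes.

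The decisive point, and the step requiring the most care, is that reinstating $H$ adds exactly one region per cell $F$, neither more nor fewer. This rests on the fact that every region $R$ of a hyperplane arrangement is an intersection of open half-spaces, hence \emph{convex}. Consequently $H\cap R$, being the intersection of the affine set $H$ with the convex set $R$, is itself convex and therefore connected whenever nonempty; so each region $R$ meets $H$ in at most one cell, and conversely each cell $F$ picks out the unique region it subdivides. Cutting a convex region by a hyperplane that passes through it yields exactly two pieces, i.e. one additional component. Thus the number of newly created regions equals the number of cells $F$, which is at most $r(n-1,d-1)$, and the recursion follows. (I would note that the same convexity fact rules out any cell contributing more than one new region or any region being split into three or more pieces, so the count is exact for each insertion and the bound is uniform over configurations.)

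It remains to verify that $\sum_{k=0}^{d}\binom{n}{k}$ satisfies the same recursion and base cases, which is a direct application of Pascal's rule:
\[ \sum_{k=0}^{d}\binom{n-1}{k}+\sum_{k=0}^{d-1}\binom{n-1}{k}=\binom{n-1}{0}+\sum_{k=1}^{d}\left(\binom{n-1}{k}+\binom{n-1}{k-1}\right)=\sum_{k=0}^{d}\binom{n}{k}. \]
Together with the matching base values, induction gives $r(n,d)\le\sum_{k=0}^{d}\binom{n}{k}$. Finally, the stated trivial bound is obtained from $\binom{n}{k}\le n^{k}\le n^{d}$ for $n\ge 1$ and $0\le k\le d$, whence $\sum_{k=0}^{d}\binom{n}{k}\le (d+1)n^{d}$.
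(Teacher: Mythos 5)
Your proof is correct. Note, however, that the paper does not prove this lemma at all: it is quoted as a known result with a citation to Buck, so there is no internal argument to compare against. What you supply is the standard self-contained proof by double induction on $(n,d)$ via the insertion recursion $r(n,d)\le r(n-1,d)+r(n-1,d-1)$, and you correctly identify and justify the one point where such proofs usually hand-wave: since every region of an arrangement is an intersection of open half-spaces and hence convex, $H\cap R$ is connected, so the cells of $H$ cut out by the other $n-1$ hyperplanes are in bijection with the regions that $H$ splits, each splitting into exactly two convex pieces; this makes the count per insertion exact and the recursion valid uniformly over configurations. Two implicit points are worth making explicit but are harmless for an upper bound: the intersections $H_i\cap H$ may coincide or be empty (parallel hyperplanes), so $H$ may carry fewer than $n-1$ hyperplanes of $H\cong\R^{d-1}$ --- this is absorbed by the monotonicity of $r(\cdot,d-1)$ in the number of hyperplanes (or of $\sum_{k=0}^{d-1}\binom{m}{k}$ in $m$); and the trivial bound $(d+1)n^d$ requires $n\ge 1$, which you noted. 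Your Pascal-rule verification and base cases are fine. Compared with simply citing Buck, your route buys a short, elementary, self-contained argument (which in fact yields equality for arrangements in general position), at the cost of a page of routine induction.
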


\begin{proof}[Proof of Lemma \ref{lemma:finiteness}]
  Fix $n\in\N$ and $\boldsymbol{x}\in[0, 1]^d$.
  Write $A_n = \big\{ \x_1, \x_2, \ldots, \x_{k_n} \big\}$.
  Let $\mathrm{M}_d(\R)$ be the set of all $d\times d$ real matrices, which can be identified with $\R^{d^2}$.
  Clearly, $\mathrm{GL}_d(\R) \subseteq \mathrm{M}_d(\R)$.
  Recall the definition of $H_{\ell,b}$ in (\ref{eq:hyperplane-H-ell-b}).
  For $\ell\in\{1, 2, \ldots, d\}$, $i\in\{1, 2, \ldots, k_n\}$, and $j\in\{0, 1, \ldots, L_n\}$, let
  \begin{align*}
    \widetilde{H}_{\ell, i,j} &:=\big\{T\in \mathrm{M}_d(\R):\ T\boldsymbol{x}_i + \boldsymbol{x}\in H_{\ell,j/L_n}\big\} \\
     & = \big\{ T\in \mathrm{M}_d(\R):\ \boldsymbol{e}_\ell \cdot (T\boldsymbol{x}_i + \boldsymbol{x}) - j/L_n=0 \big\}.
  \end{align*}
  Since $\boldsymbol{x}_i \ne \boldsymbol{0}$, it is easy to check that $\widetilde{H}_{\ell,i,j}$ is a $(d^2-1)$-dimensional affine subspace of $\R^{d^2}$, i.e., a hyperplane of $\R^{d^2}$.
  For $\ell\in\{1, 2, \ldots, d\}$ and $i\in\{1, 2, \ldots, k_n\}$, define
  \begin{equation}\label{eq:Lambda-ell-i}
    \Lambda_{\ell,i}:=\bigg\{ 0\leqslant j\leqslant L_n:\ \mathrm{dist}(\boldsymbol{x},H_{\ell,j/L_n})<\frac{\|\boldsymbol{x}_i\|}{\alpha}+\frac{1}{L_n}\bigg\}.
  \end{equation}
  Let \[ \calh= \Big\{ \widetilde{H}_{\ell, i,j} : \; 1\leqslant \ell \leqslant d,\; 1 \leqslant i \leqslant k_n,\; j \in \Lambda_{\ell,i} \Big\}. \]
  We partition $\R^{d^2}$ by hyperplanes in $\mathcal{H}$, and all connected regions of $\R^{d^2}\setminus \bigcup\calh$ are denoted by $R_1, R_2, \ldots, R_m$.
  For $1\leqslant k \leqslant m$, if $R_k \cap \cals_{\alpha}^{1/\alpha} \ne \varnothing$, then we choose an element from the set $R_k \cap \cals_{\alpha}^{1/\alpha}$. All these chosen elements make up the set $\cals_n(\boldsymbol{x})$.

  We first estimate the cardinality of $\cals_n(\boldsymbol{x})$.
  By Lemma \ref{lemma:partition-space}, we have
  \begin{equation}\label{eq:cardinality-S-x}
    \# \cals_n(\boldsymbol{x})\leqslant m \leqslant (d^2+1)(\#\calh)^{d^2}.
  \end{equation}
  By (\ref{eq:Lambda-ell-i}), we have
  \[ \# \Lambda_{\ell,i} \leqslant 2\bigg(\frac{\|\boldsymbol{x}_i\|L_n}{\alpha}+1 \bigg)+1 \leqslant \frac{2M_nL_n}{\alpha} +3. \]
  Note that $\delta_n \leqslant 2$. We have \[ L_n=\left\lceil\frac{d}{\alpha M_n \delta_n}\right\rceil\geqslant\frac{d}{\alpha M_n \delta_n} \geqslant \frac{1}{2M_n}, \]
  i.e., $2M_n L_n \geqslant 1$. So, we obtain $\# \Lambda_{\ell,i} \leqslant (8M_n L_n)/\alpha$.
  It follows that
  \[\#\calh \leqslant \sum_{\ell=1}^{d}\sum_{i=1}^{k_n}\#\Lambda_{\ell,i} \leqslant  \frac{8dM_nL_nk_n}{\alpha}, \]
  and hence by (\ref{eq:cardinality-S-x}), \[ \# \cals_n(\boldsymbol{x})\leqslant (d^2+1) \bigg( \frac{8dM_nL_nk_n}{\alpha} \bigg)^{d^2}. \]

  To prove (\ref{eq:reduce-W-x-n-finite}), it suffices to show that for $\boldsymbol{\omega} \in W_{\x,n}$ there exists $S\in\cals_n(\x)$ such that $SA_n + \boldsymbol{x} \subseteq \cale_n(\boldsymbol{\omega})$.
  For $1\leqslant\ell\leqslant d$, $1\leqslant i\leqslant k_n$, $j\in\Lambda_{i}^{\ell}$, and $ 1\leqslant k\leqslant m$, we have
  \begin{equation}\label{eq:H-ell-j-emptyset}
    \big\{ T\boldsymbol{x}_i +\x :\ T\in R_k \big\} \cap H_{\ell,j/L_n}= \emptyset.
  \end{equation}
  Note that the function $T \mapsto \boldsymbol{e}_\ell \cdot (T\boldsymbol{x}_i+ \boldsymbol{x}) - j/L_n$ is continuous on $\mathrm{M}_d(\R)$, and $R_k$ is connected in $\mathrm{M}_d(\R)$.
  So, the set $\big\{ \boldsymbol{e}_\ell \cdot (T\boldsymbol{x}_i+ \boldsymbol{x}) - j/L_n: T \in R_k \big\}$ is an interval in $\R$, which does not contain $0$ by (\ref{eq:H-ell-j-emptyset}).
  Thus, we conclude that all points in $\big\{ T\boldsymbol{x}_i +\x :\ T\in R_k \big\}$ lie in the same side of $H_{\ell,j/L_n}$.

  Next, we fix $\boldsymbol{\omega} \in W_{\x,n}$.
  There exists $T\in\cals_{\alpha}^{1/\alpha}$ such that $TA_n + \boldsymbol{x} \subseteq \cale_n(\boldsymbol{\omega})$.
  Then we can find $1 \leqslant k \leqslant m$ and $S\in\cals_n(\x)$ such that $T,S \in R_k$.
  For $1 \leqslant i \leqslant k_n$, there exist $j_1, j_2, \ldots, j_d \in\{0,1,\ldots, L_n-1\}$ such that
  \begin{equation}\label{eq:T-x-i-in-E-omega}
    T\boldsymbol{x}_i + \boldsymbol{x} \in I_{j_1, j_2, \ldots, j_d}(n) \subseteq \cale_n(\boldsymbol{\omega}).
  \end{equation}
  For $1\leqslant \ell \leqslant d$ and $j \in \{j_\ell, j_\ell+1\}$, we have $\mathrm{dist}\big( T\boldsymbol{x}_i + \boldsymbol{x}, H_{\ell,j/L_n} \big) < 1/L_n$. Note that $\|T\|^* < 1/\alpha$.
  It follows that \[ \mathrm{dist}\big( \boldsymbol{x}, H_{\ell,j/L_n} \big) < \|T \x_i\| + \frac{1}{L_n} < \frac{\|\x_i\|}{\alpha} + \frac{1}{L_n}. \]
  This implies that $ \{j_\ell, j_\ell+1\} \subset \Lambda_{\ell,i}$.
  Note that $T,S \in R_k$.
  Thus, the points $T\boldsymbol{x}_i +\x$ and $S\boldsymbol{x}_i +\x$ lie in the same side of $H_{\ell,j/L_n}$ for all $1\leqslant \ell \leqslant d$ and $j \in \{j_\ell, j_\ell+1\}$.
  By (\ref{eq:T-x-i-in-E-omega}), we conclude that \[ S\boldsymbol{x}_i + \boldsymbol{x} \in I_{j_1, j_2, \ldots, j_d}(n) \subseteq \cale_n(\boldsymbol{\omega}) \quad \forall 1 \leqslant i \leqslant k_n. \]
  That is, $SA_n + \x \subseteq \cale_n(\boldsymbol{\omega})$.
  The proof is completed.
\end{proof}

Next, we equip $\Omega_n$ with a probability measure to make it a probability space.
By (\ref{eq:condition-A-n-2}), we can find a sequence $\{p_n\}_{n=1}^{\infty}$ with $0< p_n < 1$ such that
\begin{equation}\label{eq:p-n-to-1}
  \lim_{n\to \f} p_n = 1,
\end{equation}
and
\begin{equation}\label{eq:sequence-p-n}
  \log p_n < \frac{d^2\log\delta_n}{k_n}-\frac{(d^2+1)\log k_n}{k_n}.
\end{equation}
For each $n\in\N$, we equip $\Omega_n$ with the Bernoulli product probability measure $\PP_n$ induced by probability vector $(1-p_n, p_n)$.

The set $\cale_n$ can be viewed as a random open set obtained by choosing independently sub-hypercubes $I_{j_1, \ldots, j_d}(n)$ for $j_1, \ldots, j_d \in \{0,1,\ldots,L_n-1\}$ with probability $p_n$.
The Lebesgue measure of $\cale_n$ is a random variable on $(\Omega_n, \PP_n)$, denoted by $\mu \circ \cale_n$.
For $n \in \N$ and $\boldsymbol{\omega} \in \Omega_n$, define
\begin{equation}\label{eq:def-V-n-omega}
  \mathcal{V}_n(\boldsymbol{\omega}):=\big\{ \boldsymbol{x}\in [0,1]^d:\ \text{there exists}\ T\in\cals_{\alpha}^{1/\alpha}\ \text{such that}\ TA_n + \boldsymbol{x} \subseteq\cale_n(\boldsymbol{\omega}) \big\}.
\end{equation}
Note that \[ \mathcal{V}_n(\boldsymbol{\omega}) = [0,1]^d \bigcap \bigg( \bigcup_{ T\in\cals_{\alpha}^{1/\alpha} } \bigcap_{\boldsymbol{a} \in A_n} \big( \cale_n(\boldsymbol{\omega}) - T \boldsymbol{a}\big) \bigg). \]
Since $\cale_n$ is a random open set, $\mathcal{V}_n$ is a random Borel subset of $[0,1]^d$.
The Lebesgue measure of $\mathcal{V}_n$ is also a random variable on $(\Omega_n, \PP_n)$, denoted by $\mu \circ \mathcal{V}_n$.
Let $\E$ denote the expectation of a random variable.

\begin{lemma}\label{lemma:limit-E-n-V-n}
  We have \[ \lim_{n \to \f} \E(\mu\circ\cale_n) =1 \quad \text{and} \quad \lim_{n \to \f} \E(\mu\circ\mathcal{V}_n) = 0. \]
\end{lemma}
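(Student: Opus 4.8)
The plan is to treat the two limits separately: the first is a one-line expectation computation, while the second needs the finiteness reduction of Lemma~\ref{lemma:finiteness} together with a union bound and a geometric separation estimate. For $\E(\mu\circ\cale_n)$, I would note that $\cale_n(\boldsymbol{\omega})$ is a union of the open cubes $I_{j_1,\ldots,j_d}(n)$, each of measure $L_n^{-d}$, selected independently with probability $p_n$. By Tonelli's theorem and linearity of expectation, $\E(\mu\circ\cale_n)=\sum_{j_1,\ldots,j_d}p_n\,L_n^{-d}=L_n^d\cdot p_n\cdot L_n^{-d}=p_n$, so the first limit follows at once from $(\ref{eq:p-n-to-1})$.

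For the second limit, I would again apply Tonelli's theorem to exchange expectation and integration: since $\boldsymbol{x}\in\mathcal{V}_n(\boldsymbol{\omega})$ if and only if $\boldsymbol{\omega}\in W_{\boldsymbol{x},n}$ (compare $(\ref{eq:def-V-n-omega})$ and $(\ref{eq:def-W-x-n})$), we obtain $\E(\mu\circ\mathcal{V}_n)=\int_{[0,1]^d}\PP_n(W_{\boldsymbol{x},n})\D\boldsymbol{x}$. By Lemma~\ref{lemma:finiteness}, the event $W_{\boldsymbol{x},n}$ is a finite union over $S\in\cals_n(\boldsymbol{x})$ of the events $\{SA_n+\boldsymbol{x}\subseteq\cale_n(\boldsymbol{\omega})\}$, so the union bound yields $\PP_n(W_{\boldsymbol{x},n})\leqslant\sum_{S\in\cals_n(\boldsymbol{x})}\PP_n(SA_n+\boldsymbol{x}\subseteq\cale_n(\boldsymbol{\omega}))$.

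The heart of the argument is the estimate $\PP_n(SA_n+\boldsymbol{x}\subseteq\cale_n(\boldsymbol{\omega}))\leqslant p_n^{k_n}$ for each fixed $S\in\cals_\alpha^{1/\alpha}$. The key geometric observation is that the $k_n$ points $\{S\boldsymbol{a}+\boldsymbol{x}:\boldsymbol{a}\in A_n\}$ fall into $k_n$ pairwise distinct cubes: for distinct $\boldsymbol{a},\boldsymbol{a}'\in A_n$ we have $\|S\boldsymbol{a}-S\boldsymbol{a}'\|\geqslant\|S\|_*\|\boldsymbol{a}-\boldsymbol{a}'\|>\alpha\,\delta_n M_n$, whereas two points lying in a common open cube of side $1/L_n$ are at distance strictly less than the diagonal $\sqrt{d}/L_n\leqslant\alpha\,\delta_n M_n$, the last inequality being exactly what the choice $L_n=\lceil d/(\alpha M_n\delta_n)\rceil$ guarantees; these two bounds are incompatible. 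Hence the event forces at least $k_n$ independently chosen cubes to be selected, each with probability $p_n$ (and the probability is $0$ unless every $S\boldsymbol{a}+\boldsymbol{x}$ lands inside an open cube of $[0,1]^d$), which gives the bound. Combined with $\#\cals_n(\boldsymbol{x})\leqslant C(L_n M_n k_n)^{d^2}$ and integrating this constant bound over $[0,1]^d$, I arrive at $\E(\mu\circ\mathcal{V}_n)\leqslant C(L_n M_n k_n)^{d^2}p_n^{k_n}$.

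It then remains to show this tends to $0$, which I would do by taking logarithms and invoking $(\ref{eq:sequence-p-n})$. Writing $\log\E(\mu\circ\mathcal{V}_n)\leqslant\log C+d^2\log(L_n M_n k_n)+k_n\log p_n$, substituting $k_n\log p_n<d^2\log\delta_n-(d^2+1)\log k_n$ and regrouping, the surviving $\log k_n$ contribution is $-\log k_n$, leaving $\log\E(\mu\circ\mathcal{V}_n)\leqslant\log C+d^2\log(L_n M_n\delta_n)-\log k_n$. Since $d/\alpha\leqslant L_n M_n\delta_n\leqslant d/\alpha+M_n\delta_n$ and $A$ is bounded (so $M_n$ is bounded and $\delta_n\leqslant 2$), the quantity $L_n M_n\delta_n$ stays in a bounded range and the first two terms are $O(1)$; as $k_n\to\f$, the right-hand side tends to $-\f$. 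I expect the main obstacle to be the geometric separation step: it is precisely where the definition of $L_n$ must be reconciled with the lower bound $\|S\|_*>\alpha$ enforced by $\cals_\alpha^{1/\alpha}$, and where one must exclude the degenerate cases of points landing on cube boundaries or outside $[0,1]^d$, so that the independence of cube selections legitimately produces the factor $p_n^{k_n}$.
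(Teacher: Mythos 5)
Your proposal is correct and follows essentially the same route as the paper: compute $\E(\mu\circ\cale_n)=p_n$ directly, use Tonelli to reduce the second limit to $\int_{[0,1]^d}\PP_n(W_{\boldsymbol{x},n})\D\boldsymbol{x}$, apply Lemma \ref{lemma:finiteness} with a union bound, obtain $p_n^{k_n}$ per matrix via the separation/independence argument built into the choice of $L_n$, and conclude from (\ref{eq:sequence-p-n}) and the boundedness of $L_nM_n\delta_n$. No gaps; your explicit handling of boundary/degenerate cases is a harmless refinement of the paper's argument.
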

\begin{proof}
  For $j_1, \ldots, j_d \in \{0,1,\ldots, L_n-1\}$, we have \[ \PP_n\big\{ \boldsymbol{\omega} \in \Omega_n:\ I_{j_1, \ldots, j_d}(n) \subseteq \cale_n(\boldsymbol{\omega}) \big\}=p_n. \]
  Note that the events $\big\{ \boldsymbol{\omega} \in \Omega_n:\ I_{j_1, \ldots, j_d}(n) \subseteq \cale_n(\boldsymbol{\omega})\big\}, j_1, \ldots, j_d \in \{0,1,\ldots, L_n-1\},$ are independent in $(\Omega_n,\PP_n)$.
  Thus, we have
  \[ \E(\mu\circ\cale_n) = \sum_{j_1, \ldots, j_d \in \{0,1,\ldots, L_n-1\}} \mu\big( I_{j_1, \ldots, j_d}(n) \big) \cdot \PP_n\big\{ \boldsymbol{\omega} \in \Omega_n:\ I_{j_1, \ldots, j_d}(n) \subseteq \cale_n(\boldsymbol{\omega})\big\} = p_n.  \]
  By (\ref{eq:p-n-to-1}), we have \[ \lim_{n \to \f} \E(\mu\circ\cale_n) =1.\]

  Let $\mathbbm{1}_{F}$ denote the indicator function of a set $F$.
  By (\ref{eq:def-W-x-n}) and (\ref{eq:def-V-n-omega}), one can check that
  \[ \mathbbm{1}_{W_{\x,n}}(\boldsymbol{\omega}) = \mathbbm{1}_{\mathcal{V}_n(\boldsymbol{\omega})} (\x)\quad \forall \x \in [0,1]^d\quad \forall \boldsymbol{\omega} \in \Omega_n. \]
  So, we have
  \begin{align}
    \E(\mu\circ\mathcal{V}_n) & = \int_{\boldsymbol{\omega} \in \Omega_n} \mu\big( \mathcal{V}_n( \boldsymbol{\omega}) \big) \D \PP_n(\boldsymbol{\omega}) \notag\\
    & = \int_{\boldsymbol{\omega} \in \Omega_n} \int_{\x \in [0,1]^d} \mathbbm{1}_{\mathcal{V}_n( \boldsymbol{\omega}) }(\x) \D \mu(\x) \D \PP_n(\boldsymbol{\omega}) \notag\\
    & = \int_{\boldsymbol{\omega} \in \Omega_n} \int_{\x \in [0,1]^d} \mathbbm{1}_{W_{\x,n}}(\boldsymbol{\omega}) \D \mu(\x) \D \PP_n(\boldsymbol{\omega}) \notag\\
    & =  \int_{\x \in [0,1]^d} \PP_n \big( W_{\x,n} \big) \D \mu(\x). \label{eq:E-mu-V-n}
  \end{align}
  Next we need to estimate $\PP_n\big(W_{\boldsymbol{x},n}\big)$ for $\boldsymbol{x}\in[0,1]^d$.

Fix $n\in\N$ and $\boldsymbol{x}\in[0, 1]^d$.
Write $A_n = \big\{ \x_1, \x_2, \ldots, \x_{k_n} \big\}$.
Let $\cals_n(\boldsymbol{x})$ be the finite subset of $\cals_{\alpha}^{1/\alpha}$ defined in Lemma \ref{lemma:finiteness}.
For $T\in\cals_n(\boldsymbol{x})$, noting that $\|T\|_* > \alpha$, we have
\begin{align*}
  & \;\min\{\|T\boldsymbol{x}_i-T\boldsymbol{x}_j\|:\ 1\leqslant i<j\leqslant k_n\} \\
   > &\; \alpha\min\{\|\boldsymbol{x}_i-\boldsymbol{x}_j\|:\ 1\leqslant i<j\leqslant k_n\} \\
  = & \; \alpha M_n \delta_n \geqslant\frac{d}{L_n}.
\end{align*}
This means that for $1\leqslant i<j\leqslant k_n$, the points $T\x_i + \x$ and $T\x_j + \x$ cannot lie in the same sub-hypercube with side length $1/L_n$.
This implies that the events $\big\{ \boldsymbol{\omega}\in\Omega_n:\ T\boldsymbol{x}_i + \boldsymbol{x} \in\cale_n(\boldsymbol{\omega}) \big\}, 1 \leqslant i \leqslant k_n,$ are independent in $(\Omega_n, \PP_n)$.
Thus, we obtain
\begin{align*}
  \PP_n \big\{ \boldsymbol{\omega} \in \Omega_n: TA_n +\x \subseteq\cale_n(\boldsymbol{\omega})\big\} & =\PP_n\bigg( \bigcap_{i=1}^{k_n}\big\{\boldsymbol{\omega}\in\Omega_n:\ T\boldsymbol{x}_i +\x \subseteq\cale_n(\boldsymbol{\omega}) \big\} \bigg) \\
  & =\prod_{i=1}^{k_n}\PP_n\big\{\boldsymbol{\omega}\in\Omega_n:\ T\boldsymbol{x}_i +\x \subseteq\cale_n(\boldsymbol{\omega})\big\} \\
  & \leqslant p_n^{k_n}.
\end{align*}
It follows from Lemma \ref{lemma:finiteness} that
\begin{align*}
  \PP_n(W_{\boldsymbol{x},n}) & =\PP_n \bigg( \bigcup_{T\in\cals_n(\boldsymbol{x})} \big\{ \boldsymbol{\omega}\in\Omega_n:\ TA_n+\x \subseteq \cale_n(\boldsymbol{\omega})\big\} \bigg) \\
  & \leqslant \sum_{T\in\cals_n(\boldsymbol{x})} \PP_n\big\{\boldsymbol{\omega}\in\Omega_n:\ TA_n +\x \subseteq\cale_n(\boldsymbol{\omega})\big\} \\
  & \leqslant p_n^{k_n} \cdot \#\cals_n(\boldsymbol{x}) \\
  & \leqslant C(L_n M_{n}k_{n})^{d^2}p_n^{k_n},
\end{align*}
where $C$ is the constant in Lemma \ref{lemma:finiteness}.
By (\ref{eq:E-mu-V-n}), we conclude that
\begin{equation}\label{eq:E-mu-V-n-upper-bound}
  \E(\mu\circ\mathcal{V}_n) \leqslant C(L_n M_{n}k_{n})^{d^2}p_n^{k_n}.
\end{equation}

Since the set $A$ is bounded, we can find $M>0$ such that $M_n \leqslant M$ for all $n \in \N$.
Note that $\delta_n \leqslant 2$.
We have
\begin{equation*}
  L_n\leqslant\frac{d}{\alpha M_n \delta_n}+1 \leqslant\frac{d}{\alpha M_n \delta_n} +\frac{2M}{M_n \delta_n} = \frac{\widetilde{C}}{M_n \delta_n},
\end{equation*}
where $\widetilde{C} = d/\alpha + 2M$ is a constant.
It follows that
\[ (L_n M_{n}k_{n})^{d^2}p_n^{k_n} \leqslant \big( \widetilde{C} \big)^{d^2} \delta_n^{-d^2} k_n^{d^2} p_n^{k_n} \leqslant \frac{\big( \widetilde{C} \big)^{d^2}}{k_n}, \]
where the last inequality follows from (\ref{eq:sequence-p-n}).
By (\ref{eq:condition-A-n-2}) and (\ref{eq:E-mu-V-n-upper-bound}), we conclude that \[ \lim_{n \to \f} \E(\mu\circ\mathcal{V}_n) = 0,\] as desired.
\end{proof}

Now, by applying Markov's inequality we can prove Proposition \ref{prop:main}.

\begin{proof}[Proof of Proposition \ref{prop:main}]
  Note first that the random variables $\mu\circ\cale_n$ and $\mu\circ\mathcal{V}_n$ are always in the range $[0,1]$.
  Fix $k \in \N$.
  By Markov's inequality, we have
  \[ \PP_n \Big\{ \boldsymbol{\omega} \in \Omega_n: 1-\mu\circ\cale_n(\boldsymbol{\omega}) \geqslant \frac{1}{k} \Big\} \leqslant k\big( 1- \E(\mu\circ\cale_n) \big), \]
  and \[ \PP_n \Big\{ \boldsymbol{\omega} \in \Omega_n: \mu\circ\mathcal{V}_n(\boldsymbol{\omega}) \geqslant \frac{1}{k} \Big\} \leqslant k \cdot \E(\mu\circ\mathcal{V}_n). \]
  By Lemma \ref{lemma:limit-E-n-V-n}, we can choose a large enough $n=n(k) \in \N$ such that
  \[ \PP_n \Big\{ \boldsymbol{\omega} \in \Omega_n: 1-\mu\circ\cale_n(\boldsymbol{\omega}) \geqslant \frac{1}{k} \Big\} < \frac{1}{4} \quad\text{and}\quad \PP_n \Big\{ \boldsymbol{\omega} \in \Omega_n: \mu\circ\mathcal{V}_n(\boldsymbol{\omega}) \geqslant \frac{1}{k} \Big\} < \frac{1}{4}. \]
  That is,
  \[ \PP_n \Big\{ \boldsymbol{\omega} \in \Omega_n: \mu\circ\cale_n(\boldsymbol{\omega}) > 1-\frac{1}{k} \Big\} > \frac{3}{4} \quad\text{and}\quad \PP_n \Big\{ \boldsymbol{\omega} \in \Omega_n: \mu\circ\mathcal{V}_n(\boldsymbol{\omega}) < \frac{1}{k} \Big\} > \frac{3}{4}. \]
  Thus, there exists $\boldsymbol{\omega} \in \Omega_n$ such that \[ \mu\circ\cale_n(\boldsymbol{\omega}) > 1-\frac{1}{k} \quad\text{and}\quad \mu\circ\mathcal{V}_n(\boldsymbol{\omega}) < \frac{1}{k}. \]
  Let $E_k = \cale_n(\boldsymbol{\omega})$.
  Then we have $\mu(E_k) > 1-1/k$ .
  By (\ref{eq:def-V-n-omega}), we clearly have $V_k=\big\{ \boldsymbol{x}\in [0,1]^d:\; \text{there exists}\; T\in\cals_{\alpha}^{1/\alpha}\; \text{such that}\; TA + \x \subseteq E_k \big\} \subseteq \mathcal{V}_n(\boldsymbol{\omega})$.
  So we have $\mu^{*}(V_k) \le \mu(\mathcal{V}_n(\boldsymbol{\omega}) )< 1/k$, where $\mu^{*}$ denote the Lebesgue outer measure.
  The sequence $\{E_k\}_{k=1}^\f$ is desired.
  The proof is completed.
\end{proof}

\section*{Acknowledgements}
W.~Li was supported by NSFC No.~12471085 and Science and Technology Commission of Shanghai Municipality (STCSM) No. 22DZ2229014.
Z.~Wang was supported by NSFC No.~12501110 and the China Postdoctoral Science Foundation No.~2024M763857.

\bibliographystyle{abbrv}
\bibliography{ErdosSimilarity}

\end{document}